\theoremstyle{plain}
\newtheorem{thm}{Theorem}[section]
\newtheorem{lem}[thm]{Lemma}
\theoremstyle{definition}
\theoremstyle{remark}
\newtheorem{rem}{Remark}[section]
\newtheorem{ex}[rem]{Example}
\newcommand{\forme}[1]{}
\title{Terwilliger algebras of wreath products by quasi-thin schemes}
\author[K.~Kim]{Kijung Kim}
\address{Department of Mathematics, Pusan National University, Busan 609-735, Republic of Korea}
\email{knukkj@pusan.ac.kr}
\date{\today}
\subjclass{05E15; 05E30}
\begin{document}
\maketitle

\begin{abstract}
The structure of Terwilliger algebras of wreath products by thin schemes or one-class schemes was studied in [A. Hanaki, K. Kim, Y. Maekawa, Terwilliger algebras of direct and wreath products of association schemes, J. Algebra 343 (2011) 195--200].
In this paper, we will consider the structure of Terwilliger algebras of wreath products by quasi-thin schemes.
This gives a generalization of their result.
\end{abstract}

{\footnotesize {\bf Key words:} Terwilliger algebra; Wreath product, Quasi-thin scheme.}
\footnotetext{This work was supported by the National Research Foundation of Korea Grant funded by the Korean Government(Ministry of Education, Science and Technology) [NRF-2010-355-C00002].}

\section{Introduction}\label{sec:intro}
The Terwilliger algebra is a new algebraic tool for the study of
association schemes introduced by P. Terwilliger in \cite{terwilliger, terwilliger2, terwilliger3}.
In general, this algebra is a non-commutative, finite dimensional, and semisimple $\mathbb{C}$-algebra.
In the theory of association schemes, the wreath product is a method to construct new association schemes.
Recently G. Bhattacharyya, S.Y. Song and R. Tanaka began to study Terwilliger algebras of wreath products of one-class association schemes in \cite{song}.
In particular, S.Y. Song and B. Xu gave a complete structural description of Terwilliger algebras for wreath products of one-class association schemes in \cite{sx}.
Terwilliger algebras of wreath products by thin schemes or one-class schemes were studied in \cite{hkm}.
In this paper, we give a generalization of their result by replacing thin schemes with quasi-thin schemes.

The remainder of this paper is organized as follows.
In Section~\ref{sec:pre}, we review notations and basic results on coherent configurations and Terwilliger algebras as well as important results on quasi-thin schemes.
In Section~\ref{sec:main}, based on the fact that one point extensions of quasi-thin schemes coincide with their Terwilliger algebras, we determine all central primitive idempotents of Terwilliger algebras of wreath products by quasi-thin schemes.
In Section~\ref{sec:maintheorem}, we state our main theorem.

\section{Preliminaries}\label{sec:pre}
In this section, to unify notations and terminologies given in \cite{ep, hkm, kmpwz, mp}, we combine them.
We assume that the reader is familiar with the basic notions of association schemes in \cite{zies}.

\subsection{Coherent configurations and coherent algebras}
Let $X$ be a finite set and $S$ a partition of $X \times X$.
Put by $S^\cup$ the set of all unions of the elements of $S$.
A pair $\mathcal{C}=(X,S)$ is called a \textit{coherent configuration} on $X$. if the following conditions hold:
\begin{enumerate}
\item[(1)] $1_X := \{(x,x)\mid x\in X \} \in S^\cup$.
\item[(2)] For $s \in S$, $s^{\ast} :=\{(y,x)\mid (x,y)\in s\}\in S$.
\item[(3)] For all $s,t,u\in S$ and all $x, y \in X$,
 $$p_{st}^{u} :=| \{z\in X\mid (x,z)\in s, \ (z,y)\in t\} |$$
is constant whenever $(x,y)\in u$.
\end{enumerate}
The elements of $X$, $S$ and $S^\cup$ are called the \textit{points}, the \textit{basis relations} and the \textit{relations}, respectively.
The numbers $|X|$ and $|S|$ are called the \textit{degree} and \textit{rank}.
Any set $\Delta \subseteq X$ for which $1_\Delta \in S$ is called the \textit{fiber}. The set of all fibers is denoted by $\mathrm{Fib}(\mathcal{C})$.
The coherent configuration $\mathcal{C}$ is called \textit{homogeneous} or a \textit{scheme} if $1_X \in S$.
If $Y$ is a union of fibers, then the \textit{restriction} of $\mathcal{C}$ to $Y$ is defined to be a coherent configuration
$$\mathcal{C}_Y =(Y,S_Y),$$
where $S_Y$ is the set of all non-empty relations $s \cap (Y\times Y)$ with $s \in S$.
For $s \in S$, let $\sigma_{s}$ denote the matrix in $\mathrm{Mat}_{X}(\mathbb{C})$ that has entries
\[(\sigma_{s})_{xy}  = \left\{
                      \begin{array}{ll}
                      1 & \hbox{if $(x,y) \in s$;} \\
                      0 & \hbox{otherwise.}
                      \end{array}
                     \right.\]
We call $\sigma_{s}$ the \textit{adjacency matrix} of $s \in S$.
Then $\bigoplus_{s\in S}\mathbb{C}\sigma_{s}$ becomes a subalgebra of
$\mathrm{Mat}_X(\mathbb{C})$. We call $\bigoplus_{s\in S}\mathbb{C}\sigma_{s}$ the
\textit{adjacency algebra} of $S$, and denote it by $\mathcal{A}(S)$.
A linear subspace $\mathcal{A}$ of $\mathrm{Mat}_X (\mathbb{C})$ is called a \textit{coherent algebra} if the following conditions hold:
\begin{enumerate}
\item[(1)] $\mathcal{A}$ contains the identity matrix $I_X$ and the all-one matrix $J_X$.
\item[(2)] $\mathcal{A}$ is closed with respect to the ordinary and Hadamard multiplications.
\item[(3)] $\mathcal{A}$ is closed with respect to transposition.
\end{enumerate}
Let $B$ be the set of primitive idempotents of $\mathcal{A}$ with respect to the Hadamard multiplication.
Then $B$ is a linear basis of $\mathcal{A}$ consisting of $\{0, 1 \}$-matrices such that
$$\sum_{s \in B} \sigma_{s} = J_X  ~\text{and}~  \sigma_{s} \in B \Leftrightarrow \sigma_{s}^t \in B.$$

\begin{rem} \label{rem:1}
There are bijections between the sets of coherent configurations and coherent algebras as follows:
$$S \mapsto \mathcal{A}(S) ~\text{and}~ \mathcal{A}\mapsto \mathcal{C}(\mathcal{A}),$$
where $\mathcal{C}(\mathcal{A}) = (X, S')$ with $S'=\{ s \in X\times X \mid \sigma_{s} \in B\}$.
\end{rem}

Let $\mathcal{C}=(X,S)$ be a coherent configuration. For each $x \in X$, we define $xs:= \{y \in X \mid (x,y) \in s \}$. A point $x \in X$ is called \textit{regular} if
$$|xs| \leq 1, s \in S.$$
In particular, if the set of all regular points is non-empty, then $\mathcal{C}$ is called \textit{1-regular}.

Let $\mathcal{C}=(X,S)$ be a scheme and $T$ a closed subset containing the thin residue of $S$.
Put by $S_{(T)}$ the set of all basic relations $s_{\Delta, \Gamma} := s \cap (\Delta \times \Gamma)$, where $s \in S$ and $\Delta, \Gamma \in X/T:=\{ xT \mid x \in X\}$.
Then by \cite{ep2} or \cite{kmpwz}, the pair $\mathcal{C}_{(T)}=(X,S_{(T)})$ is a coherent configuration called the \textit{thin residue extension}
of $\mathcal{C}$.

\subsection{Terwilliger algebras and one point extensions}
Let $(X,S)$ be a scheme. For $U\subseteq X$, we denote by $\varepsilon_{U}$ the diagonal matrix in $\mathrm{Mat}_{X}(\mathbb{C})$ with
entries $(\varepsilon_{U})_{xx}=1$ if $x\in U$ and $(\varepsilon_{U})_{xx}=0$ otherwise. Note that $J_{U,V}:= \varepsilon_{U} J_X \varepsilon_{V}$ and $J_U := J_{U,U}$ for $U, V \subseteq X$.

The  \textit{Terwilliger algebra} of $(X,S)$ with respect to $x_{0}\in X$ is defined as a subalgebra of $\mathrm{Mat}_X(\mathbb{C})$ generated by $\{\sigma_{s} \mid s\in S\}\cup \{\varepsilon_{x_{0}s} \mid s \in S\}$ (see \cite{terwilliger}).
The Terwilliger algebra will be denoted by $\mathcal{T}(X,S,x_{0})$ or $\mathcal{T}(S)$.
Since $\mathcal{A}(S)$ and $\mathcal{T}(S)$ are closed under transposed conjugate,
they are semisimple $\mathbb{C}$-algebras.
The set of irreducible characters of $\mathcal{T}(S)$ and $\mathcal{A}(S)$ will be denoted by
$\mathrm{Irr}(\mathcal{T}(S))$ and $\mathrm{Irr}(\mathcal{A}(S))$,
respectively. The \textit{trivial character} $1_{\mathcal{A}(S)}$ of $\mathcal{A}(S)$
is a map $\sigma_{s}\mapsto n_{s}$, where $n_{s}:=p_{ss*}^{1_X}$ is called the \textit{valency} of $s$, and the corresponding central
primitive idempotent is $|X|^{-1}J_{X}$. The \textit{trivial character} $1_{\mathcal{T}(S)}$ of $\mathcal{T}(S)$
corresponds to the central primitive idempotent $\sum_{s\in S}n_{s}^{-1}\varepsilon_{x_{0}s}J_{X}\varepsilon_{x_{0}s}$ of
$\mathcal{T}(S)$. For $\chi\in \mathrm{Irr}(\mathcal{A}(S))$ or $\mathrm{Irr}(\mathcal{T}(S))$, $e_{\chi}$ will be the corresponding
central primitive idempotent of $\mathcal{A}(S)$ or $\mathcal{T}(S)$.
For convenience, we denote $\mathrm{Irr}(\mathcal{A}(S)) \setminus \{1_{\mathcal{A}(S)}\}$ and $\mathrm{Irr}(\mathcal{T}(S)) \setminus \{1_{\mathcal{T}(S)}\}$ by $\mathrm{Irr}(\mathcal{A}(S))^\times$ and $\mathrm{Irr}(\mathcal{T}(S))^\times$, respectively.

Let $\mathcal{C}=(X,S)$ be a coherent configuration and $x \in X$. Denote by $S_x$ the set of basic relations of the smallest coherent configuration on $X$ such that
$$1_x \in S_x ~\text{and}~ S \subset S_x^\cup.$$
Then the coherent configuration $\mathcal{C}_x=(X,S_x)$ is called a \textit{one point extension} of $\mathcal{C}$.
It is easy to see that given $s, t, u \in S$ the set $xs$ and the relation $u_{xs,st}$ are unions of some fibers and some basic relations of $\mathcal{C}_x$, respectively.

\begin{rem} \label{rem:2}
A one point extension $\mathcal{C}_x$ of a scheme $\mathcal{C}$ is related to $\mathcal{T}(X,S,x)$. In fact, $\mathcal{C}_x \supseteq \mathcal{T}(X,S,x)$.
\end{rem}

\subsection{Direct sums, direct products and wreath products}
Let $\mathcal{C}=(X,S)$ and $\mathcal{C'}=(X',S')$ be coherent configurations.
Put by $X\sqcup X'$ the disjoint union of $X$ and $X'$, and by $S \boxplus S'$ the union of the set $S\cup S'$ and the set of all relations
$\Delta \times \Delta', \Delta' \times \Delta$, where $\Delta, \Delta'$ are fibers of
$\mathcal{C}$ and $\mathcal{C'}$, respectively.
Then the pair
$$\mathcal{C} \boxplus \mathcal{C'} = (X\sqcup X', S\boxplus S')$$
is a coherent configuration called the \textit{direct sum} of $\mathcal{C}$ and $\mathcal{C'}$.
Set $S \times S' = \{ s\times s' \mid s \in S, s' \in S' \}$, where $s\times s'$ is the relation on $X\times X'$ consisting of all pairs
$((\alpha, \alpha'), (\beta, \beta'))$ with $(\alpha, \beta) \in s$ and $(\alpha', \beta') \in s'$.
Then the pair
$$\mathcal{C} \times \mathcal{C'} = (X \times X', S \times S')$$
is a coherent configuration called the \textit{direct product} of $\mathcal{C}$ and $\mathcal{C'}$.
The adjacency matrix of $s \times s' \in S\times S'$ is given by the Kronecker product $\sigma_{s}\otimes \sigma_{s'}$.

Let $(X,S)$ and $(Y,T)$ be schemes.
For $s\in S$, set
$\tilde{s}=\{((x,y),(x^{\prime},y)) \mid (x,x^{\prime})\in s, \ y\in
Y\}$.
For $t\in T$, set
$\bar{t}=\{((x,y),(x^{\prime},y^{\prime})) \mid x,x^{\prime}\in X, \
(y,y^{\prime})\in t\}$. Also set $S\wr T=\{\tilde{s}\mid s\in
S\}\cup\{\bar{t}\mid t\in T\setminus\{1_Y\}\}$.
Then $(X\times Y, S\wr T)$ is a scheme called the
\textit{wreath product} of $(X,S)$ by $(Y,T)$.
For the adjacency matrices, we have $\sigma_{\tilde{s}}=\sigma_{s}\otimes I_{Y}, \sigma_{\bar{t}}=J_{X}\otimes \sigma_{t}$.
Note that
$(x_{0},y_{0})\tilde{s} =(x_{0}s,y_{0}) =\{(x,y_{0}) \mid x\in x_{0}s\}$,
$(x_{0},y_{0})\bar{t} =(X,y_{0}t)  =\{(x,y) \mid x\in X, \ y\in y_{0}t\}$
and $\varepsilon_{(x_{0},y_{0})\tilde{s}} =\varepsilon_{x_{0}s}\otimes \varepsilon_{y_{0}1_Y}$,
$\varepsilon_{(x_{0},y_{0})\bar{t}} =\sum_{s\in S}\varepsilon_{x_{0}s}\otimes \varepsilon_{y_{0}t} =I_{X}\otimes \varepsilon_{y_{0}t}$.

\subsection{Quasi-thin schemes}
A scheme $\mathcal{C}=(Y,T)$ is called \textit{quasi-thin} if $T = T_1 \cup T_2$, where $T_i$ is the set of basic relations with valency $i$ ($i \in \{1, 2 \}$).

\begin{lem}\label{lem:basic1}(\cite{hirasaka}, Lemma 4.1)
For $t \in T_2$, there exists a unique $t^{\bot}$ such that $tt^* = \{1_Y, t^{\bot} \}$.
\end{lem}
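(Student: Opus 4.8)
The plan is to examine the basis relations occurring in the product $\sigma_{t}\sigma_{t^{*}}$ and to show that, besides $1_{Y}$, exactly one of them occurs. Writing $\sigma_{t}\sigma_{t^{*}}=\sum_{u\in T}p_{tt^{*}}^{u}\sigma_{u}$, so that $tt^{*}=\{u\in T:p_{tt^{*}}^{u}\neq 0\}$, I would first record two standard constraints. On the one hand $p_{tt^{*}}^{1_{Y}}=n_{t}=2$. On the other hand, applying both sides to the all-ones vector (equivalently comparing row sums) gives $\sum_{u}p_{tt^{*}}^{u}n_{u}=n_{t}n_{t^{*}}=4$, whence $\sum_{u\neq 1_{Y}}p_{tt^{*}}^{u}n_{u}=2$. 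Since each summand is at least $1$ (because $p_{tt^{*}}^{u}\geq 1$ and $n_{u}\geq 1$ for $u\in tt^{*}$), there are at most two non-identity relations in $tt^{*}$, and the only configuration with two of them is that both are thin, of valency $1$, with $p_{tt^{*}}^{u}=1$.

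The main obstacle is to exclude this last configuration, i.e. to show that $tt^{*}$ cannot contain two distinct thin relations. The key tool is that for a thin relation $u$ (so $\sigma_{u}$ is a permutation matrix and $u^{*}u=1_{Y}$), complex multiplication by $u$ sends each basis relation to a single basis relation, with $n_{u^{*}w}=n_{w}$. Using this I would compute the coefficient of $\sigma_{1_{Y}}$ in $\sigma_{u^{*}}\sigma_{t}\sigma_{t^{*}}$ in two ways. Associating as $\sigma_{u^{*}}(\sigma_{t}\sigma_{t^{*}})=\sum_{w}p_{tt^{*}}^{w}\sigma_{u^{*}w}$ and observing that $u^{*}w=1_{Y}$ holds only for $w=u$, the coefficient equals $p_{tt^{*}}^{u}$. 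Associating instead as $(\sigma_{u^{*}}\sigma_{t})\sigma_{t^{*}}=\sigma_{u^{*}t}\,\sigma_{t^{*}}$ and using $p_{ab}^{1_{Y}}=n_{a}$ if $b=a^{*}$ and $0$ otherwise, the coefficient equals $n_{t}$ if $u^{*}t=t$ and $0$ otherwise. Equating the two expressions, and recalling $p_{tt^{*}}^{u}\geq 1$, forces $u^{*}t=t$ and $p_{tt^{*}}^{u}=n_{t}=2$. I expect the delicate point to be the bookkeeping here: the injectivity of $w\mapsto u^{*}w$ (so that $\sigma_{1_{Y}}$ arises from a single term) and the justification that a thin relation times a basis relation is again a single basis relation.

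Finally I would assemble the cases. If some non-identity thin $u$ lies in $tt^{*}$, the previous step yields $p_{tt^{*}}^{u}n_{u}=2$, which already exhausts the budget $\sum_{u\neq 1_{Y}}p_{tt^{*}}^{u}n_{u}=2$; hence $u$ is the unique non-identity relation and $tt^{*}=\{1_{Y},u\}$. If no non-identity thin relation occurs, then every non-identity $u\in tt^{*}$ has $n_{u}=2$ and contributes at least $2$ to the budget, so again there is exactly one such relation, necessarily with $p_{tt^{*}}^{u}=1$. In either case there is a unique $t^{\bot}$ with $tt^{*}=\{1_{Y},t^{\bot}\}$, which completes the proof.
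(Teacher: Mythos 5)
Your proof is correct. Note that the paper itself offers no proof of this lemma --- it is quoted verbatim from Hirasaka's paper (Lemma 4.1 of \cite{hirasaka}) --- so there is nothing internal to compare against; your argument is essentially the standard one for this fact: the valency count $\sum_{u\neq 1_Y}p_{tt^*}^{u}n_{u}=n_t n_{t^*}-n_t=2$ bounds $tt^*\setminus\{1_Y\}$ by two thin relations, and the associativity computation showing that a thin $u\in tt^*$ forces $p_{tt^*}^{u}=n_t=2$ (hence exhausts the budget by itself) correctly eliminates the two-element case. All the auxiliary facts you flag as delicate (a thin relation times a basis relation is a single basis relation of the same valency, and $w\mapsto u^{*}w$ is injective) are standard and hold for exactly the reasons you indicate.
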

In \cite{mp}, any element from the set $T^\bot = \{ t^\bot \mid t \in T_2 \}$ is called an \textit{orthogonal} of $\mathcal{C}$.
If $|T^{\bot}| = 1$ and $T^{\bot} \subseteq T_1$, then $H := \{1_{Y} \} \cup T^\bot$ is the thin residue.
Considering $\mathcal{C}_{(H)}=(Y,T_{(H)})$, it follows that given $\Delta, \Gamma \in \mathrm{Fib}(\mathcal{C}_{(H)})$ either the set $\Delta \times \Gamma
\in T_{(H)}$ or the set $\Delta \times \Gamma \not \in T_{(H)}$.
Denote the latter case by $\Delta \sim \Gamma$. Then $\sim$ is an equivalence relation on the set $\mathrm{Fib}(\mathcal{C}_{(H)})$.

Next, we state two results on quasi-thin schemes.

\begin{thm}\label{thm:coherent-terwilliger}(\cite{mp}, Theorem 6.1)
Let $\mathcal{C}=(Y,T)$ be a quasi-thin scheme and $y_{0} \in Y$.
Then $\mathcal{A}(T_{y_{0}}) = \mathcal{T}(Y,T,y_{0})$.
\end{thm}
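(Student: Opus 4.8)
The plan is to prove the two inclusions separately. The inclusion $\mathcal{T}(Y,T,y_{0}) \subseteq \mathcal{A}(T_{y_{0}})$ is already recorded in Remark~\ref{rem:2}, so the whole problem is the reverse inclusion. For this I would exploit the fact that, by the bijection of Remark~\ref{rem:1} together with the minimality built into the definition of the one point extension, $\mathcal{A}(T_{y_{0}})$ is the smallest coherent algebra containing $\mathcal{A}(T)$ and the idempotent $\varepsilon_{\{y_{0}\}} = \varepsilon_{y_{0}1_{Y}}$. Since $\mathcal{T}(Y,T,y_{0})$ visibly contains both of these, it suffices to show that $\mathcal{T}(Y,T,y_{0})$ is itself a coherent algebra; minimality then forces $\mathcal{A}(T_{y_{0}}) \subseteq \mathcal{T}(Y,T,y_{0})$ and the theorem follows.

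To organize the verification I would work with the triple products $\varepsilon_{y_{0}t}\,\sigma_{s}\,\varepsilon_{y_{0}u}$ ($s,t,u \in T$) and let $\mathcal{B}$ denote their linear span. Each such matrix is a product of generators, so $\mathcal{B} \subseteq \mathcal{T}(Y,T,y_{0})$; conversely every generator is a sum of triple products (using $I = \sum_{t \in T}\varepsilon_{y_{0}t}$ and $\varepsilon_{y_{0}t} = \varepsilon_{y_{0}t}\sigma_{1_{Y}}\varepsilon_{y_{0}t}$), so $\mathcal{B}$ contains all generators. The nonzero triple products are $\{0,1\}$-matrices with pairwise disjoint supports, because the neighborhoods $y_{0}t$ ($t\in T$) partition $Y$ and, inside a fixed block $y_{0}t \times y_{0}u$, distinct basic relations $s$ occupy disjoint positions. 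Consequently $\mathcal{B}$ automatically contains $I$ and $J$, is closed under transposition (the transpose of $\varepsilon_{y_{0}t}\sigma_{s}\varepsilon_{y_{0}u}$ is $\varepsilon_{y_{0}u}\sigma_{s^{\ast}}\varepsilon_{y_{0}t}$), and is closed under the Hadamard product. Thus the entire statement reduces to the single assertion that $\mathcal{B}$ is closed under ordinary matrix multiplication: if so, then $\mathcal{B}$ is a subalgebra containing the generators, hence $\mathcal{B}=\mathcal{T}(Y,T,y_{0})$, and $\mathcal{B}$ is a coherent algebra.

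The heart of the argument is therefore to show that $\varepsilon_{y_{0}t}\,\sigma_{s}\,\varepsilon_{y_{0}u}\,\sigma_{s'}\,\varepsilon_{y_{0}v}$ lies in $\mathcal{B}$. Writing out the $(x,z)$ entry for $x \in y_{0}t$ and $z \in y_{0}v$, this matrix records the restricted intersection number $\#\{w \in y_{0}u : (x,w)\in s,\ (w,z)\in s'\}$, and membership in $\mathcal{B}$ is exactly the statement that this count depends only on the basic relation $r$ with $(x,z)\in r$, and not on the individual points $x,z$. For an arbitrary scheme this relation-invariance can fail, since the sum runs over the restricted set $y_{0}u$ rather than over all of $Y$; this is the obstruction I expect to be the main difficulty. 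Here the quasi-thin hypothesis enters decisively: every valency is $1$ or $2$, so the middle block $y_{0}u$ has $n_{u}\le 2$ points, the restricted count takes only the values $0,1,2$, and it can be analysed directly. Using Lemma~\ref{lem:basic1} to control the products $\sigma_{s}\sigma_{s'}$ of valency-two relations through the orthogonals $t^{\bot}$, together with the orthogonal and thin-residue structure recalled above (the sets $T^{\bot}$, the closed subset $H$, and the equivalence $\sim$ on $\mathrm{Fib}(\mathcal{C}_{(H)})$), one runs through the finitely many configurations of the triple $(s,u,s')$ and confirms the desired relation-invariance in each case.

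I expect this enumeration to be the real work, and it is precisely where the bound $n_{u}\le 2$ and Lemma~\ref{lem:basic1} are indispensable: a general homogeneous scheme need not satisfy the relation-invariance of the restricted intersection numbers, and correspondingly $\mathcal{A}(S_{x})$ strictly contains $\mathcal{T}(X,S,x)$ in general. Once $\mathcal{B}$ has been shown to be multiplicatively closed, the coherent-algebra structure of $\mathcal{T}(Y,T,y_{0})=\mathcal{B}$ follows formally, and with Remark~\ref{rem:2} this yields the claimed equality.
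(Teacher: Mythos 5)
The statement you are proving is not proved in this paper at all: it is quoted from \cite{mp} (Theorem~6.1), so your proposal has to stand on its own. Your formal framework is fine: the inclusion $\mathcal{T}(Y,T,y_0)\subseteq\mathcal{A}(T_{y_0})$ is Remark~\ref{rem:2}, the reverse inclusion does follow from minimality once $\mathcal{T}(Y,T,y_0)$ is shown to be a coherent algebra, and your observations about the span $\mathcal{B}$ of the triple products $\varepsilon_{y_0t}\sigma_s\varepsilon_{y_0u}$ (disjoint $\{0,1\}$ supports, closure under transposition and Hadamard product, containment of $I$, $J$ and the generators) are all correct. The problem is that you have compressed the entire content of the theorem into the single unproved assertion that $\mathcal{B}$ is closed under ordinary multiplication, and you offer for it only the promise that one ``runs through the finitely many configurations of the triple $(s,u,s')$.'' That is not a proof, and the bound $n_u\le 2$ together with Lemma~\ref{lem:basic1} is nowhere near enough to carry it out: the required relation-invariance of the restricted counts $\#\{w\in y_0u: (x,w)\in s,\ (w,z)\in s'\}$ is a global statement whose verification is exactly where \cite{mp} needs the full trichotomy of Theorem~\ref{thm:3type} (the structure $T=T_1\{1_Y,u\}$, the direct-sum decomposition of $\mathcal{C}_{(H)}$, and $1$-regularity).

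There is also a substantive reason to distrust the reduction as you state it. Multiplicative closure of $\mathcal{B}$ is equivalent to the assertion that the basic relations of the one point extension $T_{y_0}$ are \emph{exactly} the nonempty intersections $s\cap(y_0t\times y_0u)$ -- a statement strictly stronger than the theorem, since in general $T_{y_0}$ may refine this partition, in which case $\mathcal{B}\subsetneq\mathcal{T}(Y,T,y_0)$ and $\mathcal{B}$ is not an algebra. Whether such refinement occurs is precisely the delicate, case-dependent issue the present paper wrestles with in Lemmas~\ref{lem:222} and~\ref{lem:222333}: whether a block $\Delta\times\Gamma$ of size-$2$ fibers is a single basic relation of $T_{y_0}$ has opposite answers in cases (1)/(3) and in case (2), and the answer is extracted from Theorem~\ref{thm:3type}, not from a local intersection-number computation. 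Concretely, in case (3) every point $y\in y_0T_2$ is regular in $\mathcal{C}_{y_0}$, so every basic relation of $T_{y_0}$ through $y$ is a single arc; your claim therefore forces $ys\neq y_0u$ for all $s,u\in T_2$ (otherwise the triple product $\varepsilon_{y_0t}\sigma_s\varepsilon_{y_0u}$ has a row with two ones and must split in $T_{y_0}$), and in case (2) the coincidence $\sigma_t\sigma_s=2\sigma_u$ does occur (this is what makes $\Delta\times\Gamma\in T_{y_0}$ in Lemma~\ref{lem:222333}). None of this is addressed in your proposal. To repair it you must either prove, case by case along Theorem~\ref{thm:3type}, that no triple product is split by $T_{y_0}$ and that the restricted intersection numbers are constant on each $s\cap(y_0t\times y_0u)$, or abandon the claim $\mathcal{T}=\mathcal{B}$ and instead identify the extra elements of $\mathcal{T}$ (in the spirit of the matrices $G_{y_0t,y_0t'}$ of Section~\ref{sec:com}) and show that the resulting larger space is coherent.
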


\begin{thm}\label{thm:3type}(\cite{mp}, Theorem 5.2 and Corollary 6.4)
Let $\mathcal{C}=(Y,T)$ be a quasi-thin scheme with $T^{\bot} \neq \emptyset$.
\begin{enumerate}
\item[(1)] If $|T^{\bot}| = 1$ and $T^{\bot} \subseteq T_2$, then
$$T = T_1 \{1_Y, u \},$$
where $y_{0} \in Y$ and $T^{\bot} = \{ u \}$.

\item[(2)] If $|T^{\bot}| = 1$ and $T^{\bot} \subseteq T_1$, then
$$\mathcal{C}_{(H)} = \boxplus_{i \in I} \mathcal{C}_i,$$
where $I$ is the set of classes given in the above $\sim$, $Y_i$ is the union of fibers belonging a class $i \in I$, and $\mathcal{C}_i = (\mathcal{C}_{(H)})_{Y_i}$.

\item[(3)] If $|T^{\bot}| \geq 2$, then $\mathcal{C}_{y_0}=(Y, T_{y_0})$ is 1-regular. In particular, any point of $y_0 T_2$ is regular.
\end{enumerate}
\end{thm}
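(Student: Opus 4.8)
The plan is to analyze everything through the closed subset $H = \gn{T^{\bot}}$ generated by the orthogonals, which is exactly the thin residue $\Or$ of $\mathcal{C}$, the three cases corresponding to the possible structure of $H$. First I would record the arithmetic of orthogonals coming from Lemma~\ref{lem:basic1}: for $t \in T_2$ one has $\sigma_t\sigma_{t^*} = 2\sigma_{1_Y} + \sigma_{t^{\bot}}$ if $t^{\bot} \in T_2$ and $\sigma_t\sigma_{t^*} = 2\sigma_{1_Y} + 2\sigma_{t^{\bot}}$ if $t^{\bot} \in T_1$, so in particular every $t^{\bot}$ is symmetric and lies in $T_1 \cup T_2$. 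I would also use throughout that, because a thin relation has a permutation matrix as its adjacency matrix, for $g \in T_1$ and any $s \in T$ the product $gs$ is a single basic relation with $n_{gs} = n_s$.

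For the two cases with $|T^{\bot}| = 1$, write $T^{\bot} = \{u\}$. If $u \in T_2$ (case (1)), then $u^{\bot} \in T^{\bot}$ forces $u^{\bot} = u$, whence $\sigma_u^2 = 2\sigma_{1_Y} + \sigma_u$ and $H = \{1_Y, u\}$ is closed of order $3$. Every $t \in T_2$ then has $t^{\bot} = u$, so $tt^* = H$, and I would show that each such $t$ has the form $gu$ for a unique $g \in T_1$ by reading the thin relation $g$ off the product $tu$ and observing that $g \in tu$ forces $t \in gu$, a single relation; this yields $T = T_1\{1_Y, u\}$. If instead $u \in T_1$ (case (2)), then $H = \{1_Y, u\}$ is a thin closed subset of order $2$, so the thin residue extension $\mathcal{C}_{(H)}$ is available. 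Here the content is that for fibers $\Delta, \Gamma$ of $\mathcal{C}_{(H)}$ the set $\Delta \times \Gamma$ is either a relation of $\mathcal{C}_{(H)}$ or not, $\Delta \sim \Gamma$ meaning the latter; checking that $\sim$ is an equivalence relation and that no basic relation of $\mathcal{C}_{(H)}$ joins fibers in distinct $\sim$-classes gives precisely the decomposition $\mathcal{C}_{(H)} = \boxplus_{i \in I}\mathcal{C}_i$.

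The hard case is (3), where $|T^{\bot}| \geq 2$ and I must exhibit a regular point of the one point extension $\mathcal{C}_{y_0}$ and show that every point of $y_0 T_2$ is regular. Fix $t \in T_2$ and $z \in y_0 t$; the fiber of $z$ has exactly the two points $\{z, z'\}$ with $(z, z') \in t^{\bot}$, and since $(y_0, z), (y_0, z') \in t$ the extension sees $z$ and $z'$ identically through their relation to $y_0$. The whole difficulty is to separate them inside $\mathcal{C}_{y_0}$, and for this I would bring in a second orthogonal $r = (t')^{\bot} \neq t^{\bot}$ and show that $z$ and $z'$ acquire different intersection numbers with the sets $y_0 s$ once the $r$-pairing is accounted for, so that $\{z\}$ is a fiber of $\mathcal{C}_{y_0}$. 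Verifying $|zs| \leq 1$ for every basic relation $s$ of $\mathcal{C}_{y_0}$ then gives regularity of $z$. I expect this separation step to be the main obstacle, i.e. proving that two distinct orthogonals force every valency-$2$ block to split after $y_0$ is fixed; the cleanest route is probably to compute inside $\mathcal{A}(T_{y_0}) = \mathcal{T}(Y, T, y_0)$ from Theorem~\ref{thm:coherent-terwilliger} and derive a contradiction from the existence of a surviving two-point fiber, which would pin down a single orthogonal.
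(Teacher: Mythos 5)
The first thing to note is that the paper contains no proof of this statement: Theorem~\ref{thm:3type} is imported verbatim from \cite{mp} (Theorem 5.2 and Corollary 6.4), so there is no internal argument to compare yours against and your sketch has to stand on its own. Your preliminary arithmetic (the two formulas for $\sigma_t\sigma_{t^*}$, symmetry of orthogonals, $H=\gn{T^\bot}$ being the thin residue) is correct. For part (1), however, the one nontrivial step is exactly the point you gloss over: you propose to ``read the thin relation $g$ off the product $tu$'', but a priori $tu$ could be $\{t,w\}$ with $w\in T_2$ and contain no thin element at all. This is fixable by a coset count: $H=\{1_Y,u\}$ is closed of valency $3$, and for $(x,z)\in t$ the two points of $xt$ are $u$-related, hence lie in a single $H$-coset, so $x(tH)=zH$ has $3$ elements; therefore $\sum_{v\in tH}n_v=3$, forcing $tH=tu=\{t,h\}$ with $h\in T_1$ and hence $t\in hu$. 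Without some such argument part (1) is not proved. For part (2), once $\sim$ is known to be an equivalence relation the direct-sum decomposition is essentially definitional, but transitivity of $\sim$ is the entire content of the statement and you only promise to ``check'' it.

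Part (3) is where the proposal genuinely breaks down. You conflate two inequivalent goals: showing that $\{z\}$ is a fiber of $\mathcal{C}_{y_0}$ is neither necessary nor sufficient for $z$ to be regular. A regular point may perfectly well lie in a two-point fiber, and conversely a singleton fiber $\{z\}$ can still carry a basic relation $s=\{z\}\times\Gamma$ with $|\Gamma|=2$ and hence $|zs|=2$. What has to be proved is that \emph{every} basic relation of $\mathcal{C}_{y_0}$ issuing from $z\in y_0T_2$ satisfies $|zs|\le 1$, i.e.\ that all relations out of $z$ split into functions, and your sketch never engages with how the existence of two distinct orthogonals forces this global splitting rather than merely distinguishing $z$ from its mate $z'$ inside one fiber. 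You yourself flag the ``separation step'' as the main obstacle and then defer it; but that step \emph{is} the theorem. Your fallback of computing inside $\mathcal{A}(T_{y_0})=\mathcal{T}(Y,T,y_0)$ via Theorem~\ref{thm:coherent-terwilliger} is at least consistent with the logical order in \cite{mp} (where the $1$-regularity appears as Corollary 6.4 after Theorem 6.1), so it is not circular from the standpoint of this paper, but no actual contradiction is derived from ``a surviving two-point fiber'', so part (3) remains a statement of intent rather than a proof.
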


\begin{ex}\label{qexam}
Some examples for each case of Theorem \ref{thm:3type} can be found in \cite{hanakimi}.
\begin{enumerate}
\item[(1)] as12 No.51
\item[(2)] as12 No.48
\item[(3)] as28 No.175, No 176
\end{enumerate}
\end{ex}



\section{Wreath products by quasi-thin schemes}\label{sec:main}

Let $(X,S)$ and $(Y,T)$ be schemes. Fix $x_{0}\in X$ and
$y_{0}\in Y$ and consider $(X\times Y, S\wr T)$ and $\mathcal{T}(X\times Y, S \wr T, (x_0, y_0))$.
In the rest of this section, we assume that $(Y,T)$ is a quasi-thin scheme with $T^{\bot} \neq \emptyset$.

\subsection{The restriction of $\mathcal{T}(S \wr T)$ to $X \times (Y\setminus \{y_0\})$}\label{sec:res}

$\mathcal{T}(S \wr T)$ is generated by $\{ J_{X}\otimes \sigma_{t}, I_{X}\otimes \varepsilon_{y_{0}t} \mid t \in T\setminus \{1\} \}
\cup \{ \sigma_{s}\otimes I_{Y}, \varepsilon_{x_{0}s}\otimes \varepsilon_{\{y_{0}\}} \mid s \in S \}$.

Since $\sum_{s \in S} \varepsilon_{x_{0}s}\otimes \varepsilon_{\{y_{0}\}} = I_{X}\otimes \varepsilon_{\{y_{0}\}}$ and
$\sum_{s \in S} \sigma_{s}\otimes I_{Y} = J_{X}\otimes I_{Y}$, we consider a subalgebra $\mathcal{U}$ generated by $\{ J_{X}\otimes \sigma_{t}, I_{X}\otimes \varepsilon_{y_{0}t} \mid t \in T \}$. It is easy to see that $\mathcal{U}$ is generated by $\{ |X|^{-1}J_{X}\otimes \varepsilon_{y_{0}t_1} \sigma_{t} \varepsilon_{y_{0}t_2} \mid t_1, t_2 \in T \}$ and isomorphic to $\mathcal{T}(T)$.
So by Theorem \ref{thm:coherent-terwilliger}, a basis $B(\mathcal{U})$ of $\mathcal{U}$ can be determined by $\mathcal{C}(\mathcal{T}(T))$, i.e. $B(\mathcal{U}) = \{ J_{X}\otimes \sigma_{c} \mid c \in \mathcal{R}\}$, where $\mathcal{R}$ is the set of basic relations of $\mathcal{C}(\mathcal{T}(T))$.

We consider $\varepsilon_{X}\otimes \varepsilon_{Y\setminus  \{y_{0}\}} \mathcal{T}(S \wr T) \varepsilon_{X}\otimes \varepsilon_{Y\setminus  \{y_{0}\}}$.
Since $\mathcal{T}(S \wr T)$ is generated by $B(\mathcal{U}) \cup \{ \sigma_{s}\otimes I_{Y}, \varepsilon_{x_{0}s}\otimes \varepsilon_{\{y_{0}\}} \mid s \in S \}$,
$(\varepsilon_{X}\otimes \varepsilon_{Y\setminus  \{y_{0}\}}) \mathcal{T}(S \wr T) (\varepsilon_{X}\otimes \varepsilon_{Y\setminus  \{y_{0}\}})$
is generated by $\{ J_{X}\otimes \sigma_{c} \mid c \in \mathcal{R}_{Y\setminus \{y_0\}} \} \cup \{ \sigma_{s}\otimes I_{Y\setminus \{y_0\}} \mid s \in S\}$. Thus, we can determine a basis of $\varepsilon_{X}\otimes \varepsilon_{Y\setminus  \{y_{0}\}} \mathcal{T}(S \wr T) \varepsilon_{X}\otimes \varepsilon_{Y\setminus  \{y_{0}\}}$ with respect to the set of basic relations of $\mathcal{C}(\mathcal{T}(T))_{Y\setminus \{y_0\}}$.

\subsection{A basis of $\mathcal{A}(T_{y_{0}}) = \mathcal{T}(Y, T, y_0)$}\label{sec:basis}
By Theorem \ref{thm:coherent-terwilliger}, $\mathcal{A}(\mathcal{C}_{y_{0}}) = \mathcal{T}(Y, T, y_0)$.
In order to find a basis of $\mathcal{T}(Y, T, y_0)$, it is enough to know all basic relations of $\mathcal{C}_{y_{0}}$.
In particular, we focus on $\Delta \times \Gamma \in T_{y_0} \text{or} \not \in T_{y_0}$ for $\Delta,\Gamma \in \mathrm{Fib}(\mathcal{C}_{y_0})$ of size 2.

\begin{lem}\label{lem:222}
If $\mathcal{C}=(Y,T)$ belongs to case (1) or (3) in Theorem \ref{thm:3type},
then $\Delta \times \Gamma \not \in T_{y_0}$ for $\Delta,\Gamma \in \mathrm{Fib}(\mathcal{C}_{y_0})$ of size 2.
\end{lem}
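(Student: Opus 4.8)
My plan is to prove the two cases of the hypothesis by separate arguments, since case~(3) is essentially immediate from $1$-regularity whereas case~(1) needs a short structural computation together with a counting argument. In both cases I would first use the observation (from the paragraph preceding Remark~\ref{rem:2}) that every set $y_0t$ with $t\in T$ is a union of fibers of $\mathcal{C}_{y_0}$; since $|y_0t|=n_t\in\{1,2\}$, a fiber $\Delta\in\mathrm{Fib}(\mathcal{C}_{y_0})$ of size $2$ must equal $y_0t$ for some $t\in T_2$. I would also use that $\mathcal{C}_{y_0}$ refines $\mathcal{C}$, so that each $w\in T_{y_0}$ is contained in a unique basic relation $v\in T$.

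For case~(3) the argument is short. Any size-$2$ fiber satisfies $\Delta=y_0t\subseteq y_0T_2$ for some $t\in T_2$, and by Theorem~\ref{thm:3type}(3) every point of $y_0T_2$ is regular. If $\Delta\times\Gamma=w\in T_{y_0}$ for size-$2$ fibers $\Delta,\Gamma$, then choosing any $a\in\Delta$ gives $aw=\Gamma$, so $|aw|=2$; this contradicts regularity of $a$, which forces $|aw|\le 1$. Hence $\Delta\times\Gamma\notin T_{y_0}$.

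For case~(1) the plan is to first record the structure imposed by $T=T_1\{1_Y,u\}$ with $u\in T_2$, then run a path/structure-constant argument. Since valencies multiply, $G:=T_1$ is a thin closed subset and $T_2=Gu$; from $|T^{\bot}|=1$ one obtains $sus^{\ast}=u$, hence $\sigma_s\sigma_u=\sigma_u\sigma_s$, for all $s\in G$, together with $uu^{\ast}=u^{\ast}u=\{1_Y,u\}$ and the identity $\sigma_u\sigma_{u^{\ast}}=2\sigma_{1_Y}+\sigma_u$, so distinct points have at most one common $u$-out-neighbour. A size-$2$ fiber may then be written $\Delta=y_1u$ with $t_1=s_1u$ and $y_1:=y_0s_1$ a singleton fiber, and likewise $\Gamma=y_2u$ with $y_2:=y_0s_2$. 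Suppose $\Delta\times\Gamma=w\in T_{y_0}$; the containing relation $v\in T$ satisfies $|av|\ge|\Gamma|=2$ for each $a\in\Delta$, so $v\in T_2$ and $av=\Gamma$. Writing $h:=s_1^{\ast}s_2\in G$, the chain $(a,y_1)\in u^{\ast}$, $(y_1,y_2)\in h$, $(y_2,\gamma)\in u$ places every pair of $\Delta\times\Gamma$ inside $u^{\ast}hu$; using $\sigma_h\sigma_u=\sigma_u\sigma_h$ and $u^{\ast}u=\{1_Y,u\}$ this product equals $\{h,uh\}$, whose only valency-$2$ member gives $v=uh=hu$. Evaluating $av$ through $v=hu$ yields $(ah)u=\Gamma=y_2u$, and the overlap bound forces $ah=y_2$, i.e.\ $(a,y_2)\in h\in T_1$. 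But the same chain shows $(a,y_2)\in u^{\ast}h\in T_2$; as $(a,y_2)$ lies in a unique basic relation this is a contradiction, so $\Delta\times\Gamma\notin T_{y_0}$.

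The routine parts are the valency bookkeeping and the reductions to ``$v\in T_2$ with $av=\Gamma$''. I expect the main obstacle to be case~(1): pinning $v$ down precisely by computing the constituents of $u^{\ast}hu$, and then converting $av=\Gamma$ into the statement that $(a,y_2)$ lies simultaneously in a thin relation and in a valency-$2$ relation. It is essential here that $u$ has valency $2$; the parallel computation in case~(2), where the orthogonal is thin, does not close up (and indeed the conclusion is false there), so the proof must genuinely exploit $u\in T_2$ through the overlap identity $\sigma_u\sigma_{u^{\ast}}=2\sigma_{1_Y}+\sigma_u$.
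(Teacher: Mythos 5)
Your proposal is correct, and its engine is the same as the paper's: the identity $\sigma_u\sigma_{u^\ast}=2\sigma_{1_Y}+\sigma_u$ (coming from $u^{\bot}=u$) forces the product of the two relations defining $\Delta=y_0t$ and $\Gamma=y_0t'$ to split into a thin constituent and a valency-$2$ constituent. The difference is in how that split is converted into the conclusion. The paper simply computes $\sigma_t\sigma_{t'^{\ast}}=2\sigma_{t_1t_1'}+\sigma_{t_1}\sigma_u\sigma_{t_1'^{\ast}}$ and reads off from the coefficients that the four pairs of $\Delta\times\Gamma$ must meet two distinct basic relations of $T$, so $\Delta\times\Gamma$ cannot be a basic relation of the refinement $T_{y_0}$; this is a one-line structure-constant argument that does not need the normality $sus^{\ast}=u$. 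You instead argue by contradiction: you pin the containing relation down to $v=uh$ (for which you first establish, correctly, that $u$ commutes with $T_1$ because $(sus^{\ast})^{\bot}=sus^{\ast}\in T^{\bot}=\{u\}$), and then show the pair $(a,y_2)$ would lie in both $h\in T_1$ and $u^{\ast}h\in T_2$. Your version is longer but fully explicit; the paper's version is slicker but leaves implicit the step that both constituents of $t^{\ast}t'$ actually meet $\Delta\times\Gamma$ (which follows from the thin constituent having coefficient $2$ and valency $1$). Your treatment of case (3) via $1$-regularity is exactly what the paper's ``clearly'' refers to.
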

\begin{proof}
In the case of Theorem \ref{thm:3type}(1), each $t \in T_2$ is represented by $t_1 u$ for some $t_1 \in T_1$.
For $t, t' \in T_2$,
$\sigma_t \sigma_{t'^*} = \sigma_{t_1} \sigma_{u}\sigma_{u^*} \sigma_{t_1'^*} = \sigma_{t_1} (2\sigma_{1_Y} + \sigma_{u}) \sigma_{t_1'^*} = 2\sigma_{t_1 t_1'} + \sigma_{t_1} \sigma_{u} \sigma_{t_1'^*}$
So the coefficient of $\sigma_{t_1} \sigma_{u} \sigma_{t_1'^*}$ implies that $\Delta \times \Gamma \not \in T_{y_0}$ for $\Delta,\Gamma \in \mathrm{Fib}(\mathcal{C}_{y_0})$ of size 2.

In the case of Theorem \ref{thm:3type}(3), clearly $\Delta \times \Gamma \not \in T_{y_0}$ for $\Delta,\Gamma \in \mathrm{Fib}(\mathcal{C}_{y_0})$ of size 2.
\end{proof}

\begin{lem}\label{lem:222333}
Suppose that $\mathcal{C}=(Y,T)$ belongs to case (2) in Theorem \ref{thm:3type}.
For distinct $i_1, i_2 \in I$, if fibers $\Delta \subseteq Y_{i_1}$ and $\Gamma \subseteq Y_{i_2}$, then $\Delta \times \Gamma \in T_{y_0}$.
\end{lem}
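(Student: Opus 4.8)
The plan is to show that the block $\varepsilon_{\Delta}\,\mathcal{T}(Y,T,y_0)\,\varepsilon_{\Gamma}$ is one-dimensional, spanned by $\sigma_{\Delta\times\Gamma}$; since $\mathcal{T}(Y,T,y_0)=\mathcal{A}(T_{y_0})$ by Theorem~\ref{thm:coherent-terwilliger}, this is exactly the assertion $\Delta\times\Gamma\in T_{y_0}$. First I would record the input coming from case (2): because $i_1\neq i_2$, the fibers $\Delta,\Gamma$ lie in different summands of $\mathcal{C}_{(H)}=\boxplus_{i\in I}\mathcal{C}_i$, so $\Delta\times\Gamma$ is a single basic relation of $\mathcal{C}_{(H)}$, i.e. $\varepsilon_{\Delta}\,\mathcal{A}(T_{(H)})\,\varepsilon_{\Gamma}=\mathbb{C}\sigma_{\Delta\times\Gamma}$. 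More generally, for any fiber $\Theta$ of $\mathcal{C}_{(H)}$ lying in the class $i_0$ of $y_0$ (the ``thin'' class, which collects exactly the cosets reachable from $y_0$ by thin relations), the blocks $\Delta\times\Theta$ and $\Theta\times\Gamma$ are likewise single basic relations of $\mathcal{C}_{(H)}$, hence all-one on their support.

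Next I would compare the generators of $\mathcal{T}(Y,T,y_0)$ with the coherent algebra $\mathcal{A}(T_{(H)})$ of the thin residue extension. Writing $\varepsilon_{\Theta}$ for the idempotent of a coset $\Theta$, one has $\mathcal{A}(T_{(H)})=\langle \sigma_{s},\varepsilon_{\Theta}\rangle$. For $s\in T_2$ the sphere $y_0 s$ is a full $H$-coset (this is where $|T^{\bot}|=1$, $T^{\bot}=\{u\}\subseteq T_1$ enters, since $t^{\bot}=u$ forces $y_0 s$ to be $H$-invariant), so $\varepsilon_{y_0 s}=\varepsilon_{\Theta}\in\mathcal{A}(T_{(H)})$; only the singletons $\varepsilon_{y_0 g}$ with $g\in T_1$ are new, and each such coset $\Theta=y_0 g H$ is thin, with $\varepsilon_{y_0 g}=\tfrac12(\varepsilon_{\Theta}+D_{\Theta})$ where $D_{\Theta}:=\varepsilon_{y_0 g}-\varepsilon_{y_0 g u}$. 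Hence $\mathcal{T}(Y,T,y_0)$ is generated by $\mathcal{A}(T_{(H)})$ together with the difference operators $D_{\Theta}$, one for each thin coset $\Theta$, and every element of $\varepsilon_{\Delta}\,\mathcal{T}(Y,T,y_0)\,\varepsilon_{\Gamma}$ is a combination of words $\varepsilon_{\Delta} P_0 D_{\Theta_1} P_1\cdots D_{\Theta_m} P_m \varepsilon_{\Gamma}$ with $P_j\in\mathcal{A}(T_{(H)})$ and each $\Theta_j$ thin.

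The crux is then to show that every such word with $m\geq 1$ is a scalar multiple of $\sigma_{\Delta\times\Gamma}$. For the leftmost difference operator I would use $\varepsilon_{\Delta} P_0 \varepsilon_{\Theta_1}=c_0\,\sigma_{\Delta\times\Theta_1}$ (the block is all-one, by the first paragraph), so that $\varepsilon_{\Delta} P_0 D_{\Theta_1}=c_0\,\sigma_{\Delta\times\Theta_1} D_{\Theta_1}$ has all its rows, indexed by $a\in\Delta$, equal; right multiplication preserves this, so the whole word has constant rows over $\Delta$. Symmetrically, the rightmost factor $D_{\Theta_m} P_m \varepsilon_{\Gamma}=c_m\,D_{\Theta_m}\sigma_{\Theta_m\times\Gamma}$ forces constant columns over $\Gamma$; being constant in both indices, the word is constant on $\Delta\times\Gamma$, i.e. a multiple of $\sigma_{\Delta\times\Gamma}$. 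Words with $m=0$ lie in $\mathcal{A}(T_{(H)})$ and are handled by the first paragraph. Therefore $\varepsilon_{\Delta}\,\mathcal{T}(Y,T,y_0)\,\varepsilon_{\Gamma}=\mathbb{C}\sigma_{\Delta\times\Gamma}$, which gives $\Delta\times\Gamma\in T_{y_0}$. I expect the main obstacle to be precisely this step: ruling out that the finer one-point refinement, present only through the thin-coset operators $D_{\Theta}$, leaks into the cross-class block. The point is that the all-one blocks $\sigma_{\Delta\times\Theta}$ and $\sigma_{\Theta\times\Gamma}$ cannot distinguish the two points of a thin coset $\Theta$ and hence annihilate $D_{\Theta}$ from either side.
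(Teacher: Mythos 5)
Your proposal reaches the correct conclusion and is, in substance, a valid proof, but it is packaged quite differently from the paper's. The paper first establishes the identity $(\mathcal{C}_{(H)})_{Y'}=(\mathcal{C}_{y_0})_{Y'}$ on $Y'=\cup_{t\in T_2}y_0t$ by comparing generating sets of the two compressed algebras, reads off the blocks with $i_1,i_2\neq i_0$ directly from the direct-sum decomposition of Theorem \ref{thm:3type}(2), and settles the case where one fiber lies in $Y_{i_0}$ by a separate one-line observation. You instead prove the stronger-looking statement $\varepsilon_\Delta\mathcal{T}(Y,T,y_0)\varepsilon_\Gamma=\mathbb{C}\sigma_{\Delta\times\Gamma}$ by presenting $\mathcal{T}(Y,T,y_0)$ as generated by $\mathcal{A}(T_{(H)})$ together with the difference operators $D_\Theta$ attached to the thin cosets, and then killing every word containing a $D_\Theta$ by a row/column-constancy argument. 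Your supporting computations are correct: $s^*s=\{1_Y,u\}$ does show that $y_0s$ is a full $H$-coset for $s\in T_2$, the decomposition $\varepsilon_{y_0g}=\tfrac12(\varepsilon_\Theta+D_\Theta)$ is right, and the passage from one-dimensionality of the block to $\Delta\times\Gamma\in T_{y_0}$ via Theorem \ref{thm:coherent-terwilliger} is fine. This route is more hands-on and arguably more self-contained than the paper's.

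There is, however, one false intermediate claim that you should repair. In your first paragraph you assert that for any thin coset $\Theta$ (a fiber of $\mathcal{C}_{(H)}$ in the class $i_0$) the blocks $\Delta\times\Theta$ and $\Theta\times\Gamma$ are single basic relations of $\mathcal{C}_{(H)}$. This fails when $\Delta\subseteq Y_{i_0}$, a case the lemma permits since only $i_1\neq i_2$ is assumed: there $\Delta$ is a singleton fiber of $\mathcal{C}_{y_0}$, its coset $\Delta H$ lies in the \emph{same} summand $\mathcal{C}_{i_0}$ as $\Theta$, and $\Delta H\times\Theta$ splits into thin basic relations of $\mathcal{C}_{(H)}$; hence $\varepsilon_\Delta P_0\varepsilon_{\Theta_1}$ can be, say, a $0$--$1$ row with a single nonzero entry rather than a multiple of $\sigma_{\Delta\times\Theta_1}$, and the justification of your left-hand constancy step collapses. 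The argument survives only because row-constancy over a one-element set $\Delta$ is vacuous, so the column-constancy coming from the right-hand end of the word (where $\Theta_m\times\Gamma$ genuinely is an all-one block, $\Gamma$ lying in a different summand) already forces the $1\times2$ block to be constant. You should either restrict the all-one claim to the fiber among $\Delta,\Gamma$ that lies outside $Y_{i_0}$ and note that the other side is automatic, or dispose of the $Y_{i_0}$ case up front as the paper does: a basic relation properly contained in $\Delta\times\Gamma$ with $|\Delta|=1$ would split the size-two fiber $\Gamma$, which is impossible.
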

\begin{proof}
First, we show that $(C_{(H)})_{Y'} = (C_{y_0})_{Y'}$, where $Y' = \cup_{t \in T_2}y_0 t$.
Since $\mathcal{A}(\mathcal{C}_{y_0}) = \mathcal{T}(T) = \langle \{ \varepsilon_{y_0 t_1} \sigma_t \varepsilon_{y_0 t_2} \mid    t_1, t_2, t \in T \} \rangle$, $\varepsilon_{Y'} \mathcal{A}(\mathcal{C}_{y_0}) \varepsilon_{Y'} = \langle \{ \varepsilon_{y_0 t_1} \sigma_t \varepsilon_{y_0 t_2} \mid    t_1, t_2 \in T_2, t \in T \} \rangle$.
By thin residue extension,
$\mathcal{A}(\mathcal{C}_{(H)}) = \langle \{ \varepsilon_{\Delta} \sigma_t \varepsilon_{\Delta t} \mid \Delta \in Y/H, t \in T  \} \rangle$.
So we have $\varepsilon_{Y'} \mathcal{A}(\mathcal{C}_{(H)}) \varepsilon_{Y'} = \langle \{ \varepsilon_{\Delta} \sigma_t \varepsilon_{\Delta t} \mid \Delta \in Y/H, t \in T, \Delta= y_0 t', \Delta t= y_0 t''  ~\text{for some}~ t', t'' \in T_2 \} \rangle$. Thus, $(C_{(H)})_{Y'} = (C_{y_0})_{Y'}$.

Now we consider $(C_{y_0})_{Y'}$. Note that $Y' = \cup_{i \in I \setminus \{i_0\}} Y_i$ and $Y_{i_0} = \cup_{t \in T_1} y_0 t$, where $i_0$ is a class of $I$ such that $y_0 \in Y_{i_0}$.
Since $\mathcal{C}_{(H)} = \boxplus_{i \in I} \mathcal{C}_i$ and $(C_{(H)})_{Y'} = (C_{y_0})_{Y'}$, if fibers $\Delta \subseteq Y_{i_1}$ and $\Gamma \subseteq Y_{i_2}$ for distinct $i_1, i_2 \in I\setminus \{i_0\}$, then $\Delta \times \Gamma \in T_{y_0}$.
Clearly, for $i_0$ and $i_1 \in I \setminus \{i_0\}$, if fibers $\Delta \subseteq Y_{i_0}$ and $\Gamma \subseteq Y_{i_1}$, then $\Delta \times \Gamma , \Gamma \times \Delta \in T_{y_0}$.
\end{proof}

\subsection{Central primitive idempotents of $\mathcal{T}(X\times Y, S \wr T, (x_0, y_0))$}\label{sec:com}

Set $F^{(t)}=(x_{0},y_{0})\bar{t}=(X,y_{0}t)$ and
$U^{(t)}=(S\wr T)_{(x_{0},y_{0})\bar{t}}$ for $t\in T$.
If $t \in T_1$, then $(F^{(t)},U^{(t)})$ is isomorphic to $(X,S)$.
If $t \in T_2$, then $(F^{(t)},U^{(t)})$ is isomorphic to the wreath product of $(X,S)$ by the trivial scheme of degree $2$.

For $\chi \in \mathrm{Irr}(\mathcal{T}(U^{(1_Y)}))^\times$, define
$$\tilde{e}_{\chi} = e_{\chi}\otimes \varepsilon_{\{y_{0}\}}\in \mathcal{T}(S\wr T).$$

For $t \in T_1 \setminus\{1_Y\}$ and
$\varphi \in \mathrm{Irr}(\mathcal{A}(U^{(t)}))^\times$, define
$$\bar{e}_{\varphi} = e_{\varphi}\otimes \varepsilon_{y_{0}t} \in \mathcal{T}(S\wr T).$$

For $t \in T_{2}$ and
$\psi \in \mathrm{Irr}(\mathcal{A}(S))^\times$, define
$$\hat{e}_{\psi} = e_{\psi}\otimes \varepsilon_{y_{0}t} \in \mathcal{T}(S\wr T).$$

Then they are idempotents of $\mathcal{T}(S\wr T)$.

\begin{lem}\label{lem:32}(\cite{hkm}, Lemma 4.2 and 4.4)
For $\chi \in \mathrm{Irr}(\mathcal{T}(U^{(1_Y)}))^\times$,
$\tilde{e}_{\chi}$ is a central primitive idempotent of $\mathcal{T}(S \wr T)$.
\end{lem}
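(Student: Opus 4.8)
The plan is to establish the two required properties separately: first that $\tilde{e}_{\chi}$ is central in $\mathcal{T}(S \wr T)$, and then that the corner $\tilde{e}_{\chi}\,\mathcal{T}(S \wr T)\,\tilde{e}_{\chi}$ is a simple algebra, which for a central idempotent is equivalent to primitivity. Throughout I identify $(F^{(1_Y)},U^{(1_Y)})$ with $(X,S)$ and hence $\mathcal{T}(U^{(1_Y)})$ with $\mathcal{T}(X,S,x_0)\subseteq\mathrm{Mat}_X(\mathbb{C})$, so that $e_{\chi}\in\mathrm{Mat}_X(\mathbb{C})$ and $\tilde{e}_{\chi}=e_{\chi}\otimes\varepsilon_{\{y_0\}}$; idempotency is then immediate from $(e_{\chi}\otimes\varepsilon_{\{y_0\}})^2=e_{\chi}^2\otimes\varepsilon_{\{y_0\}}^2=\tilde{e}_{\chi}$.

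For centrality it suffices to check that $\tilde{e}_{\chi}$ commutes with each generator listed in Section~\ref{sec:res}. For $\sigma_{s}\otimes I_Y$ and $\varepsilon_{x_0 s}\otimes\varepsilon_{\{y_0\}}$ the computation reduces on the $X$-factor to $e_{\chi}\sigma_{s}=\sigma_{s}e_{\chi}$ and $e_{\chi}\varepsilon_{x_0 s}=\varepsilon_{x_0 s}e_{\chi}$, which hold because $e_{\chi}$ is central in $\mathcal{T}(X,S,x_0)$ and $\sigma_{s},\varepsilon_{x_0 s}$ lie in that algebra. For $I_X\otimes\varepsilon_{y_0 t}$ with $t\neq 1_Y$ both products vanish, since $y_0\notin y_0 t$ gives $\varepsilon_{\{y_0\}}\varepsilon_{y_0 t}=\varepsilon_{y_0 t}\varepsilon_{\{y_0\}}=0$. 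The only genuinely non-formal case is $J_X\otimes\sigma_{t}$ with $t\neq 1_Y$: here I would first show that $J_X$ sits in the trivial block of $\mathcal{T}(X,S,x_0)$, i.e. $e_0 J_X=J_X=J_X e_0$ for the trivial idempotent $e_0=\sum_{s\in S}n_s^{-1}\varepsilon_{x_0 s}J_X\varepsilon_{x_0 s}$; a direct computation using $\varepsilon_{x_0 s}J_X\varepsilon_{x_0 s}J_X=n_s\,\varepsilon_{x_0 s}J_X$ together with $\sum_{s}\varepsilon_{x_0 s}=I_X$ yields this. Since $\chi\neq 1_{\mathcal{T}(S)}$ we have $e_{\chi}e_0=0$, whence $e_{\chi}J_X=e_{\chi}e_0 J_X=0$ and likewise $J_X e_{\chi}=0$; thus both $\tilde{e}_{\chi}(J_X\otimes\sigma_{t})$ and $(J_X\otimes\sigma_{t})\tilde{e}_{\chi}$ vanish. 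This proves that $\tilde{e}_{\chi}$ is central.

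For primitivity I would pass to the corner. Because $\tilde{e}_{\chi}=e_{\chi}\otimes\varepsilon_{\{y_0\}}$ is supported on $F^{(1_Y)}=X\times\{y_0\}$, a short computation gives $\tilde{e}_{\chi}\,\mathcal{T}(S \wr T)\,\tilde{e}_{\chi}=(e_{\chi}\mathcal{D}e_{\chi})\otimes\varepsilon_{\{y_0\}}$, where $\mathcal{D}\subseteq\mathrm{Mat}_X(\mathbb{C})$ is the $(y_0,y_0)$-block of $\mathcal{T}(S \wr T)$ viewed as an algebra of $Y$-block matrices with $X$-entries. The key step is to identify $\mathcal{D}$ with $\mathcal{T}(X,S,x_0)$. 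The inclusion $\mathcal{T}(X,S,x_0)\subseteq\mathcal{D}$ is clear from the blocks of $\sigma_{s}\otimes I_Y$ and $\varepsilon_{x_0 s}\otimes\varepsilon_{\{y_0\}}$. For the reverse inclusion I would observe that every $(y,y')$-block of every generator of $\mathcal{T}(S \wr T)$ already lies in $\mathcal{T}(X,S,x_0)$: the blocks that occur are $\sigma_{s}$, $\varepsilon_{x_0 s}$, $I_X$, $J_X$ and $0$. Since the $(y_0,y_0)$-block of a product is the sum over paths $y_0\to y_1\to\cdots\to y_0$ of the corresponding products of blocks, and $\mathcal{T}(X,S,x_0)$ is closed under products and sums, the $(y_0,y_0)$-block of any element of $\mathcal{T}(S \wr T)$ lies in $\mathcal{T}(X,S,x_0)$; hence $\mathcal{D}\subseteq\mathcal{T}(X,S,x_0)$. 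Therefore $\mathcal{D}=\mathcal{T}(X,S,x_0)$ and $e_{\chi}\mathcal{D}e_{\chi}=e_{\chi}\,\mathcal{T}(X,S,x_0)\,e_{\chi}$, which is a simple matrix algebra because $e_{\chi}$ is a central primitive idempotent of $\mathcal{T}(X,S,x_0)$. Consequently $\tilde{e}_{\chi}\,\mathcal{T}(S \wr T)\,\tilde{e}_{\chi}$ is simple, so $\tilde{e}_{\chi}$ is primitive.

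I expect the main obstacle to be precisely the reverse inclusion $\mathcal{D}\subseteq\mathcal{T}(X,S,x_0)$, since the map sending an element of $\mathcal{T}(S \wr T)$ to its $(y_0,y_0)$-block is not an algebra homomorphism and the off-diagonal blocks coming from $J_X\otimes\sigma_{t}$ do contribute to products. The block-by-block bookkeeping above circumvents this: it is enough that each individual block of each generator already lies in $\mathcal{T}(X,S,x_0)$, after which closure of that algebra under multiplication finishes the argument. I note that this reasoning uses nothing special about $(Y,T)$ beyond its being a scheme, which is why the statement coincides with the earlier thin case; the quasi-thin hypothesis will instead be needed for the remaining families $\bar{e}_{\varphi},\hat{e}_{\psi}$ and for the completeness of the list of central primitive idempotents.
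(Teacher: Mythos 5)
Your proof is correct and follows essentially the same route as the paper: Lemma~\ref{lem:32} is quoted from \cite{hkm} without proof here, but your argument (checking commutation with each generator, using $e_{\chi}J_X=J_Xe_{\chi}=0$ for $\chi\neq 1_{\mathcal{T}(S)}$ to kill the $J_X\otimes\sigma_t$ terms, then identifying the corner $\tilde{e}_{\chi}\,\mathcal{T}(S\wr T)\,\tilde{e}_{\chi}$ with the simple component $e_{\chi}\mathcal{T}(X,S,x_0)$) is exactly the template the paper itself uses to prove the analogous Lemma~\ref{lem:34}. Your additional care with the reverse inclusion $\mathcal{D}\subseteq\mathcal{T}(X,S,x_0)$ is a correct filling-in of detail rather than a deviation.
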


\begin{lem}\label{lem:33}(\cite{hkm}, Lemma 4.3 and 4.4)
For $t \in T_{1} \setminus \{1_Y\}$ and
$\varphi \in \mathrm{Irr}(\mathcal{A}(U^{(t)}))^\times$,
$\bar{e}_{\varphi}$ is a central primitive idempotent of $\mathcal{T}(S \wr T)$.
\end{lem}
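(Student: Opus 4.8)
The plan is to check the four properties characterising a central primitive idempotent of a semisimple algebra for $\bar{e}_{\varphi}=e_{\varphi}\otimes\varepsilon_{y_{0}t}$: that it lies in $\mathcal{T}(S\wr T)$, that it is idempotent, that it is central, and that it is primitive. Throughout I identify $e_{\varphi}\in\mathcal{A}(U^{(t)})$ with the corresponding element of $\mathcal{A}(S)\subseteq\mathrm{Mat}_{X}(\mathbb{C})$ under $(F^{(t)},U^{(t)})\cong(X,S)$, and I write $z:=y_{0}t$ for the unique point of $y_{0}t$, so that $\varepsilon_{y_{0}t}=\varepsilon_{\{z\}}$ has rank one. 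Membership is immediate: $e_{\varphi}\otimes I_{Y}$ lies in the subalgebra generated by $\{\sigma_{s}\otimes I_{Y}\mid s\in S\}$ because $e_{\varphi}\in\mathcal{A}(S)$, while $I_{X}\otimes\varepsilon_{y_{0}t}\in\mathcal{T}(S\wr T)$ since $t\neq 1_{Y}$, and their product is $\bar{e}_{\varphi}$. Idempotency is equally immediate, since $\bar{e}_{\varphi}^{2}=e_{\varphi}^{2}\otimes\varepsilon_{y_{0}t}^{2}=\bar{e}_{\varphi}$.

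For centrality I would verify that $\bar{e}_{\varphi}$ commutes with each generator of $\mathcal{T}(S\wr T)$. It commutes with $\sigma_{s}\otimes I_{Y}$ because $e_{\varphi}$ is central in $\mathcal{A}(S)$, and with $I_{X}\otimes\varepsilon_{y_{0}t'}$ because the two $Y$-factors $\varepsilon_{y_{0}t}$ and $\varepsilon_{y_{0}t'}$ are diagonal and hence commute. The remaining two cases each reduce to a vanishing of both products: against $\varepsilon_{x_{0}s}\otimes\varepsilon_{\{y_{0}\}}$ because $\varepsilon_{\{y_{0}\}}\varepsilon_{y_{0}t}=0$ (as $z\neq y_{0}$ once $t\neq 1_{Y}$), and against $J_{X}\otimes\sigma_{t'}$ because $J_{X}e_{\varphi}=e_{\varphi}J_{X}=0$. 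This last identity is exactly where the hypothesis $\varphi\in\mathrm{Irr}(\mathcal{A}(S))^{\times}$ enters: a nontrivial $e_{\varphi}$ is orthogonal to the trivial idempotent $|X|^{-1}J_{X}$.

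The substance of the argument, and the step I expect to be the main obstacle, is primitivity. Since $\bar{e}_{\varphi}$ is central, the two-sided ideal it generates coincides with the corner $\bar{e}_{\varphi}\mathcal{T}(S\wr T)\bar{e}_{\varphi}$, and my plan is to show that this corner is a full matrix algebra, which forces $\bar{e}_{\varphi}$ to be primitive. Writing elements of $\mathcal{T}(S\wr T)$ in block form over $Y$, one has $\bar{e}_{\varphi}M\bar{e}_{\varphi}=(e_{\varphi}M_{zz}e_{\varphi})\otimes\varepsilon_{\{z\}}$, where $M_{zz}\in\mathrm{Mat}_{X}(\mathbb{C})$ is the $(z,z)$-block of $M$. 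Hence the corner equals $(e_{\varphi}\mathcal{B}e_{\varphi})\otimes\varepsilon_{\{z\}}$, where $\mathcal{B}$ is the algebra of all $(z,z)$-blocks of $\mathcal{T}(S\wr T)$. The inclusion $\mathcal{A}(S)\subseteq\mathcal{B}$ is clear, since the $(z,z)$-block of $\sigma_{s}\otimes I_{Y}$ is $\sigma_{s}$; the real content is the reverse inclusion after cutting by $e_{\varphi}$, namely $e_{\varphi}\mathcal{B}e_{\varphi}\subseteq e_{\varphi}\mathcal{A}(S)e_{\varphi}$.

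To establish this I would expand $M_{zz}$ as a sum of words in the generators and track the $Y$-coordinate along each word. Among the four generator types only $J_{X}\otimes\sigma_{t'}$ alters the $Y$-coordinate, and since $(z,z)\notin t'$ it always moves off $z$; the generator $\varepsilon_{x_{0}s}\otimes\varepsilon_{\{y_{0}\}}$ is supported on the block at $y_{0}\neq z$ and contributes nothing to the $(z,z)$-block; and $I_{X}\otimes\varepsilon_{y_{0}t'}$, $\sigma_{s}\otimes I_{Y}$ fix the $Y$-coordinate with $X$-parts lying in $\mathcal{A}(S)$. Consequently a word contributing to the $(z,z)$-block either never leaves $z$, in which case its $X$-part lies in $\mathcal{A}(S)$, or it leaves $z$, in which case, reading the word from the right, its $X$-part factors as $(\cdots)\,J_{X}\,P$ with $P\in\mathcal{A}(S)$ the $X$-part accumulated before the first departure. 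Multiplying on the right by $e_{\varphi}$ and sliding $P$ through the central $e_{\varphi}$ reduces this to $(\cdots)(J_{X}e_{\varphi})P=0$. Thus every excursion off $z$ is annihilated, leaving $e_{\varphi}\mathcal{B}e_{\varphi}=e_{\varphi}\mathcal{A}(S)e_{\varphi}$, which is a simple (full matrix) block of the semisimple algebra $\mathcal{A}(S)$. This identifies the corner as a single matrix algebra and shows $\bar{e}_{\varphi}$ is a central primitive idempotent. The delicate point is the bookkeeping in the word/block expansion, in particular the claim that a departing word always produces a $J_{X}$ immediately to the left of an $\mathcal{A}(S)$-factor, which is what makes $J_{X}e_{\varphi}=0$ applicable.
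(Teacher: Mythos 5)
Your proposal is correct and follows essentially the same route as the paper: the paper cites \cite{hkm} for this lemma, and its proof of the analogous Lemma~\ref{lem:34} proceeds exactly as you do, by checking commutation with the four generator types (using $e_{\varphi}J_{X}=J_{X}e_{\varphi}=0$ for the nontrivial case and $\varepsilon_{\{y_0\}}\varepsilon_{y_0t}=0$ for $t\neq 1_Y$) and then identifying $\bar{e}_{\varphi}\mathcal{T}(S\wr T)$ with the simple block $e_{\varphi}\mathcal{A}(S)$. Your word-by-word block expansion merely fills in the detail behind the paper's one-line assertion that this corner is ``naturally isomorphic'' to $e_{\varphi}\mathcal{A}(S)$.
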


By mimicking the proof of Lemma \ref{lem:33}, we get the following lemma.

\begin{lem}\label{lem:34}
For $t \in T_{2}$ and
$\psi \in \mathrm{Irr}(\mathcal{A}(S))^\times$,
$\hat{e}_{\psi}$ is a central primitive idempotent of $\mathcal{T}(S \wr T)$.
\end{lem}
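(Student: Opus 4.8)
The plan is to follow the proof of Lemma~\ref{lem:33} line by line, the only genuinely new point being that $y_0t$ now consists of two points rather than one. Writing $e_\psi=\sum_{s\in S}c_s\sigma_s$ inside $\mathcal{A}(S)$ and using $\sigma_s\otimes\varepsilon_{y_0t}=(\sigma_s\otimes I_Y)(I_X\otimes\varepsilon_{y_0t})\in\mathcal{T}(S\wr T)$, I first record that $\hat e_\psi=\sum_{s}c_s(\sigma_s\otimes\varepsilon_{y_0t})$ lies in $\mathcal{T}(S\wr T)$; and $\hat e_\psi^2=e_\psi^2\otimes\varepsilon_{y_0t}^2=\hat e_\psi$, so $\hat e_\psi$ is an idempotent of the algebra.

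For centrality I would check commutation with each generator listed in Section~\ref{sec:res}. With $\sigma_s\otimes I_Y$ this follows from $e_\psi$ being central in $\mathcal{A}(S)$; with $I_X\otimes\varepsilon_{y_0t'}$ from the fact that diagonal matrices commute and $y_0t\cap y_0t'=\emptyset$ when $t'\neq t$; and with $\varepsilon_{x_0s}\otimes\varepsilon_{\{y_0\}}$ from $y_0\notin y_0t$ (here $t\in T_2$, so $t\neq1_Y$), which makes both products zero. The only generator requiring the hypothesis $\psi\neq1_{\mathcal{A}(S)}$ is $J_X\otimes\sigma_{t'}$: since $e_\psi$ is orthogonal to the trivial central primitive idempotent $|X|^{-1}J_X$ of $\mathcal{A}(S)$, we have $e_\psi J_X=J_X e_\psi=0$, so $\hat e_\psi(J_X\otimes\sigma_{t'})=0=(J_X\otimes\sigma_{t'})\hat e_\psi$. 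Hence $\hat e_\psi$ is a central idempotent, and this is the one place where $\psi\in\mathrm{Irr}(\mathcal{A}(S))^\times$ is used.

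The real work is primitivity, for which I would show that the corner $\hat e_\psi\mathcal{T}(S\wr T)\hat e_\psi$ is a simple algebra. Because $\hat e_\psi(\sigma_s\otimes I_Y)\hat e_\psi=e_\psi\sigma_s\otimes\varepsilon_{y_0t}$, this corner contains $e_\psi\mathcal{A}(S)\otimes\mathbb{C}\,\varepsilon_{y_0t}$, which is isomorphic to the simple Wedderburn block $e_\psi\mathcal{A}(S)$ of $\mathcal{A}(S)$ and has $\hat e_\psi$ as its identity. To see there is nothing else, note that $y_0t\subseteq Y\setminus\{y_0\}$, so by the computation of Section~\ref{sec:res} the restriction $(\varepsilon_X\otimes\varepsilon_{y_0t})\mathcal{T}(S\wr T)(\varepsilon_X\otimes\varepsilon_{y_0t})$ is spanned by the diagonal matrices $\sigma_s\otimes\varepsilon_{y_0t}$ ($s\in S$) together with the matrices $J_X\otimes\varepsilon_{y_0t}\sigma_c\varepsilon_{y_0t}$ ($c\in\mathcal{R}$). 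Every element of the second family carries $J_X$ in its $X$-part and is therefore annihilated on both sides by $\hat e_\psi$ through $e_\psi J_X=0$, while the first family, being diagonal with the same $X$-block $\sigma_s$ over each of the two points of $y_0t$, contributes exactly $e_\psi\mathcal{A}(S)\otimes\mathbb{C}\,\varepsilon_{y_0t}$. Thus $\hat e_\psi\mathcal{T}(S\wr T)\hat e_\psi=e_\psi\mathcal{A}(S)\otimes\mathbb{C}\,\varepsilon_{y_0t}$ is simple, so $\hat e_\psi$ is a central primitive idempotent.

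I expect this last containment to be the main obstacle, since it is the only step where the size-$2$ fiber truly alters the argument of Lemma~\ref{lem:33}, in which $y_0t$ was a single point. The key structural input is that, inside the block cut out by the non-trivial idempotent $e_\psi$, there is no way either to connect or to separate the two copies of $X$ lying over $y_0t$: every matrix of $\mathcal{T}(S\wr T)$ that is off-diagonal on $y_0t$, and equally every matrix that distinguishes the two points of $y_0t$, carries the factor $J_X$ (by the description of the subalgebra $\mathcal{U}\cong\mathcal{T}(T)$ in Section~\ref{sec:res}) and is killed by $e_\psi$. Were this to fail, $\hat e_\psi$ could split as $e_\psi\otimes\varepsilon_{\{y_1\}}+e_\psi\otimes\varepsilon_{\{y_2\}}$ into two central idempotents and primitivity would be lost; ruling this out is precisely what the identity $e_\psi J_X=0$ achieves, and it is the heart of the proof.
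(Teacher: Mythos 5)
Your proposal is correct and follows essentially the same route as the paper: centrality is checked against the same four families of generators with the same justifications (centrality of $e_\psi$ in $\mathcal{A}(S)$, the vanishing of $e_\psi J_X$, and $t\neq 1_Y$), and primitivity is reduced to the simplicity of the block $e_\psi\mathcal{A}(S)$. The only difference is that the paper merely asserts that $\hat e_\psi\mathcal{T}(S\wr T)$ is naturally isomorphic to $e_\psi\mathcal{A}(S)$, whereas you justify this identification via the restriction computation of Section~\ref{sec:res}; your version supplies detail the paper omits rather than a different argument.
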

\begin{proof}
First, we show that $\hat{e}_{\psi}$ commutes with $\sigma_{s}\otimes I_{Y}$, $J_{X}\otimes \sigma_{u}$ ($u \in
T\setminus\{1_Y\}$), $\varepsilon_{x_{0}s}\otimes \varepsilon_{\{y_{0}\}}$, and $I_{X}\otimes \varepsilon_{y_{0}u}$ ($u\in T\setminus\{1_Y\}$).
For $s \in S$, $\hat{e}_{\psi} (\sigma_{s}\otimes I_{Y}) = \sum_{u\in T} \hat{e}_{\psi}(\sigma_{s}\otimes \varepsilon_{y_0 u}) = (e_{\psi}\otimes \varepsilon_{y_0 t} )(\sigma_{s}\otimes \varepsilon_{y_0 t})$.
Since $e_{\psi}$ commutes with $\sigma_s$, we have $\hat{e}_{\psi} (\sigma_{s}\otimes I_{Y}) = (\sigma_{s}\otimes I_{Y}) \hat{e}_{\psi}$.
Since $t \neq 1_Y$, we have $\hat{e}_{\psi}(\varepsilon_{x_{0}s}\otimes \varepsilon_{\{y_{0}\}}) = (\varepsilon_{x_{0}s}\otimes \varepsilon_{\{y_{0}\}})\hat{e}_{\psi} = 0$.
Since $e_{1_{\mathcal{A}(S)}}=|X|^{-1}J_X$ and $e_{1_{\mathcal{A}(S)}}e_\psi=e_\psi e_{1_{\mathcal{A}(S)}}=0$, we have $\hat{e}_{\psi}(J_X\otimes \sigma_u) = (J_X\otimes \sigma_u)\hat{e}_{\psi} = 0$.
Also, $\hat{e}_{\psi}(I_{X}\otimes \varepsilon_{y_{0}u}) = (I_{X}\otimes \varepsilon_{y_{0}u})\hat{e}_{\psi}$ is trivial.

Now we show that $\hat{e}_{\psi}$ is primitive.
The map  $\pi:\mathcal{T}(S\wr T) \rightarrow \hat{e}_{\psi}\mathcal{T}(S\wr T)$ is a projection.
Actually, $\hat{e}_{\psi}\mathcal{T}(S\wr T)$ is naturally isomorphic to $e_{\psi}\mathcal{A}(S)$.
Since $e_{\psi}$ is a central primitive idempotent of $\mathcal{A}(S)$, $\hat{e}_{\psi}$ is primitive.
\end{proof}

From now on, we define the other central primitive idempotents of $\mathcal{T}(S \wr T)$.
Suppose that $(Y,T)$ belongs to case (1) or (3) in Theorem \ref{thm:3type}.
We define the following matrices $G_{y_0 t, y_0 t'}$ for $t, t' \in T_2$.
Let $G_{y_0 t, y_0 t'} = \frac{1}{2|X|}J_{X}\otimes (J_{\{y_{t(1)}\}, \{y_{t'(1)}\}} + J_{\{y_{t(2)}\}, \{y_{t'(2)}\}} - J_{\{y_{t(1)}\}, \{y_{t'(2)}\}} - J_{\{y_{t(2)}\}, \{y_{t'(1)}\}})$, where $y_0 t = \{ y_{t(1)}, y_{t(2)} \}$ and $y_0 t' = \{ y_{t'(1)}, y_{t'(2)} \}$.
It is easy to see that $\{G_{y_0 t, y_0 t'} \mid t, t' \in T_2 \}$ is a linearly independent subset of $\mathcal{T}(S \wr T)$.

\begin{lem}\label{lem:ideal}
$\langle\{G_{y_0 t, y_0 t'} \mid t, t' \in T_2 \}\rangle$ is an ideal. Moreover, $\langle\{G_{y_0 t, y_0 t'} \mid t, t' \in T_2 \}\rangle \cong \mathrm{Mat}_{T_2}(\mathbb{C})$.
\end{lem}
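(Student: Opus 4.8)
The plan is to first rewrite each spanning matrix in a transparent rank-one form and then handle the two assertions in turn. Writing $\mathbf{e}_y$ for the standard basis vector of $\mathbb{C}^Y$ at $y$ and setting $v_t = \mathbf{e}_{y_{t(1)}} - \mathbf{e}_{y_{t(2)}}$ for $t \in T_2$, the $Y$-component of $G_{y_0 t, y_0 t'}$ is exactly $v_t v_{t'}^{\top}$, so that
$$G_{y_0 t, y_0 t'} = \tfrac{1}{2|X|}\, J_X \otimes v_t v_{t'}^{\top}.$$
Since the sets $y_0 t$ ($t \in T$) partition $Y$, the vectors $v_t$ have pairwise disjoint supports, whence $v_{t'}^{\top} v_{t''} = 2\delta_{t', t''}$. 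I would record this orthogonality relation at the outset, as it drives every subsequent computation.

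For the isomorphism I would compute $G_{y_0 t_1, y_0 t_2} G_{y_0 t_3, y_0 t_4}$ directly: using $J_X^2 = |X| J_X$ together with $v_{t_2}^{\top} v_{t_3} = 2\delta_{t_2, t_3}$, the product collapses to $\delta_{t_2, t_3}\, G_{y_0 t_1, y_0 t_4}$. Thus the family $\{G_{y_0 t, y_0 t'}\}_{t, t' \in T_2}$ satisfies the matrix-unit relations; since it is linearly independent (as already noted in the excerpt), the linear map sending $G_{y_0 t, y_0 t'}$ to the matrix unit $E_{t, t'}$ is an algebra isomorphism onto $\mathrm{Mat}_{T_2}(\mathbb{C})$. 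This settles the ``Moreover'' clause and simultaneously shows the span is a subalgebra.

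To prove it is an ideal, I would multiply a generic $G_{y_0 t, y_0 t'}$ on each side by each generator of $\mathcal{T}(S \wr T)$. Three of the four generator types are immediate: $\sigma_s \otimes I_Y$ multiplies $G_{y_0 t, y_0 t'}$ by the valency $n_s$ (using $\sigma_s J_X = J_X \sigma_s = n_s J_X$); $\varepsilon_{x_0 s} \otimes \varepsilon_{\{y_0\}}$ annihilates it on both sides, since $y_0 \notin y_0 t$ for $t \in T_2$ forces $\varepsilon_{\{y_0\}} v_t = 0$; and $I_X \otimes \varepsilon_{y_0 u}$ multiplies it by $\delta_{u,t}$ on the left and $\delta_{u,t'}$ on the right, because $\varepsilon_{y_0 u} v_t = \delta_{u,t} v_t$. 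In each case the outcome is a scalar multiple of some $G_{y_0 t'', y_0 t'''}$, hence lies in the span.

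The only substantial case, and the main obstacle, is the generator $J_X \otimes \sigma_u$, for which $(J_X \otimes \sigma_u) G_{y_0 t, y_0 t'} = \tfrac12 J_X \otimes (\sigma_u v_t) v_{t'}^{\top}$; this lies in the span precisely when $\sigma_u v_t$ lies in $W := \mathrm{span}\{v_{t''} : t'' \in T_2\}$. I would establish this invariance module-theoretically rather than by case analysis. Put $W^+ = \mathrm{span}\{\mathbf{1}_{y_0 u} : u \in T\}$, where $\mathbf{1}_U = \sum_{y \in U}\mathbf{e}_y$. Since $\sigma_u \mathbf{e}_{y_0} = \mathbf{1}_{y_0 u^*}$, one has $W^+ = \mathcal{A}(T)\,\mathbf{e}_{y_0}$, an $\mathcal{A}(T)$-submodule of $\mathbb{C}^Y$. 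A dimension count ($\dim W^+ = |T|$, so $\dim (W^+)^{\perp} = |Y| - |T| = |T_2|$) together with the orthogonality $v_t \perp \mathbf{1}_{y_0 u}$ identifies $W = (W^+)^{\perp}$. Because $\mathcal{A}(T)$ is closed under conjugate transposition, the orthogonal complement of a submodule is again a submodule, so $W$ is $\mathcal{A}(T)$-invariant; in particular $\sigma_u v_t \in W$, and expanding it in the basis $\{v_{t''}\}$ exhibits $(J_X \otimes \sigma_u) G_{y_0 t, y_0 t'}$ as a linear combination of the $G_{y_0 t'', y_0 t'}$. The right-multiplication case is symmetric, using $v_{t'}^{\top}\sigma_u = (\sigma_{u^*} v_{t'})^{\top}$ and the same invariance, which completes the argument.
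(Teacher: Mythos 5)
Your proof is correct, and for the one nontrivial step it takes a genuinely different route from the paper. Both arguments begin from the same place in spirit: your factorization $G_{y_0 t, y_0 t'} = \tfrac{1}{2|X|} J_X \otimes v_t v_{t'}^{\top}$ with $v_{t'}^{\top} v_{t''} = 2\delta_{t',t''}$ just makes transparent the matrix-unit computation that the paper carries out by writing the product of the two four-term sums explicitly, and the treatment of the generators $\sigma_s \otimes I_Y$, $\varepsilon_{x_0 s}\otimes\varepsilon_{\{y_0\}}$ and $I_X\otimes\varepsilon_{y_0 u}$ is the same easy bookkeeping in both. The divergence is at $J_X\otimes\sigma_u$: the paper invokes Lemma~\ref{lem:222} to argue that, in cases (1) and (3), each basis relation of $\mathcal{C}(\mathcal{T}(S\wr T))$ meets a product $y_0h\times y_0t$ of two $2$-element fibers in a matching, so that $\sigma_u$ acts on the family $\{G_{y_0t,y_0t'}\}$ by a signed permutation ($\sigma_u G_{y_0t,y_0t'}=\pm G_{y_0h,y_0t'}$); you instead identify $W=\mathrm{span}\{v_t\}$ as the orthogonal complement of the cyclic module $\mathcal{A}(T)\mathbf{e}_{y_0}$ (your dimension count $|Y|-|T|=|T_2|$ is right) and use closure of $\mathcal{A}(T)$ under transposition to conclude $W$ is invariant. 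Your version is cleaner and avoids the paper's somewhat loose claim about a single $h$ and a sign (in general $\sigma_u v_t$ is only a linear combination of the $v_h$, which is all one needs); the paper's version yields the finer monomial-action information and makes visible where the hypothesis ``case (1) or (3)'' enters. Note that in your argument that hypothesis is used only through the prior assertion that the $G_{y_0t,y_0t'}$ lie in $\mathcal{T}(S\wr T)$ at all --- which is exactly where Lemma~\ref{lem:222} is really needed, and which both you and the paper take as given from the sentence preceding the lemma. It would be worth one sentence in your write-up acknowledging that dependence explicitly.
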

\begin{proof}
First, we prove that $\sigma_u G_{y_0 t, y_0 t'}, G_{y_0 t, y_0 t'} \sigma_u \in \langle\{G_{y_0 t, y_0 t'} \mid t, t' \in T_2 \}\rangle$ for $u \in S \wr T$.
Since $(\sigma_u)_{y_0 h, y_0t} \neq 0$ for some $h \in T_2$ and $y_0 h \times y_0t \not \in \mathcal{C}(\mathcal{T}(S \wr T))$ by Lemma \ref{lem:222}, we have $\sigma_u G_{y_0 t, y_0 t'} = \pm G_{y_0 h, y_0 t'}$.
Similarly, $G_{y_0 t, y_0 t'} \sigma_u  \in \langle\{G_{y_0 t, y_0 t'} \mid t, t' \in T_2 \}\rangle$ is proved.
For $u \in S \wr T$ and $t, t' \in T_2$, clearly $\varepsilon_{(x_0,y_0)u} G_{y_0 t, y_0 t'} = \delta_{(x_0,y_0)u (X\times y_0 t)}G_{y_0 t, y_0 t'} \in \langle\{G_{y_0 t, y_0 t'} \mid t, t' \in T_2 \}\rangle$
and $G_{y_0 t, y_0 t'}\varepsilon_{(x_0,y_0)u} \in \langle\{G_{y_0 t, y_0 t'} \mid t, t' \in T_2 \}\rangle$. So $\langle\{G_{y_0 t, y_0 t'} \mid t, t' \in T_2 \}\rangle$ is an ideal.

Now we prove that $G_{y_0 t, y_0 t'} G_{y_0 t''', y_0 t''} = \delta_{t't'''}G_{y_0 t, y_0 t''}$.
It is enough to show that $G_{y_0 t, y_0 t'}G_{y_0 t', y_0 t''} = G_{y_0 t, y_0 t''}$.
By calculation, we have $G_{y_0 t, y_0 t'}G_{y_0 t', y_0 t''} = \frac{1}{2|X|}J_{X}\otimes (J_{\{y_{t(1)}\}, \{y_{t'(1)}\}} + J_{\{y_{t(2)}\}, \{y_{t'(2)}\}} - J_{\{y_{t(1)}\}, \{y_{t'(2)}\}} - J_{\{y_{t(2)}\}, \{y_{t'(1)}\}}) \\
\frac{1}{2|X|}J_{X}\otimes (J_{\{y_{t'(1)}\}, \{y_{t''(1)}\}} + J_{\{y_{t'(2)}\}, \{y_{t''(2)}\}} - J_{\{y_{t'(1)}\}, \{y_{t''(2)}\}} - J_{\{y_{t'(2)}\}, \{y_{t''(1)}\}}) = G_{y_0 t, y_0 t''}$.

Finally, we prove that $\langle\{G_{y_0 t, y_0 t'} \mid t, t' \in T_2 \}\rangle \cong \mathrm{Mat}_{T_2}(\mathbb{C})$.
For $t, t' \in T_2$, let $e_{tt'}$ be the $|T_2| \times |T_2|$ matrix whose $(t, t')$-entry is $1$ and whose other entries are all zero.
Then the linear map $\varphi: \langle\{G_{y_0 t, y_0 t'} \mid t, t' \in T_2 \}\rangle \rightarrow \mathrm{Mat}_{T_2}(\mathbb{C})$ defined by
$\varphi(G_{y_0 t, y_0 t'}) = e_{tt'}$ is an isomorphism.
\end{proof}

Define $$e_\eta = \sum_{t \in T_2} \frac{1}{2|X|}J_{X}\otimes (\varepsilon_{y_{0}t} - \overline{\varepsilon_{y_{0}t}}),$$
where $\overline{\varepsilon_{y_{0}t}} := J_{\{y_{t(1)}\}, \{y_{t(2)}\}} + J_{\{y_{t(2)}\}, \{y_{t(1)}\}}$.
Then by Lemma \ref{lem:ideal}, $e_\eta$ is a central primitive idempotent of $\mathcal{T}(S\wr T)$.

\vskip5pt
Suppose that $(Y,T)$ belongs to case (2) in Theorem \ref{thm:3type}.
Put $y_0 \in Y_{i_0}$. For each $i \in I \setminus \{i_0\}$, we consider the set $U_i := \{ t \in T_2 \mid y_0 t \subseteq Y_i \}$.
Define the following matrices $G_{y_0 t, y_0 t'}$ for $t, t' \in U_i$.
Let $G_{y_0 t, y_0 t'} = \frac{1}{2|X|}J_{X}\otimes (J_{\{y_{t(1)}\}, \{y_{t'(1)}\}} + J_{\{y_{t(2)}\}, \{y_{t'(2)}\}} - J_{\{y_{t(1)}\}, \{y_{t'(2)}\}} - J_{\{y_{t(2)}\}, \{y_{t'(1)}\}})$.
According to process in the proof of Lemma \ref{lem:ideal}, we can prove that $\langle\{G_{y_0 t, y_0 t'} \mid t, t' \in U_i \}\rangle$ is an ideal.
For each $i \in I \setminus \{i_0\}$, define
$$e_{\eta_i} = \sum_{t \in U_i} \frac{1}{2|X|}J_{X}\otimes (\varepsilon_{y_{0}t} - \overline{\varepsilon_{y_{0}t}}),$$
where $\overline{\varepsilon_{y_{0}t}} := J_{\{y_{t(1)}\}, \{y_{t(2)}\}} + J_{\{y_{t(2)}\}, \{y_{t(1)}\}}$.
Then $e_{\eta_i}$ is a central primitive idempotent of $\mathcal{T}(S\wr T)$.
We denote $\sum_{i \in I\setminus \{i_0\}} e_{\eta_i}$ by $e_\eta$.

\begin{lem}\label{lem:35}
The sum of $e_{1_{\mathcal{T}(S\wr T)}}$, $\tilde{e}_{\chi}$'s, $\bar{e}_{\varphi}$'s, $\hat{e}_{\psi}$'s and $e_\eta$ is the identity element.
\end{lem}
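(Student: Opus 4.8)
The plan is to exhibit the listed elements as a family of pairwise orthogonal idempotents whose traces sum to $|X|\,|Y|=\mathrm{tr}(I_{X\times Y})$, and then conclude that their sum is the identity of $\mathcal{T}(S\wr T)$. First I would record that each element is a central primitive idempotent: this is Lemma~\ref{lem:32} for $\tilde e_\chi$, Lemma~\ref{lem:33} for $\bar e_\varphi$, Lemma~\ref{lem:34} for $\hat e_\psi$, while $e_{1_{\mathcal{T}(S\wr T)}}$ is the trivial one and $e_\eta$ (resp.\ each $e_{\eta_i}$) is central primitive by Lemma~\ref{lem:ideal}. Since distinct central primitive idempotents of a semisimple algebra are orthogonal, it suffices to check that the listed idempotents are pairwise distinct. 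This is transparent from their supports relative to the block decomposition $I_X\otimes I_Y=\sum_{t\in T}I_X\otimes\varepsilon_{y_0 t}$: $\tilde e_\chi$ is supported on the block $t=1_Y$, $\bar e_\varphi$ on a block $t\in T_1\setminus\{1_Y\}$, and $\hat e_\psi,e_\eta$ on blocks $t\in T_2$; within the $T_2$-blocks $e_\eta$ carries the trivial idempotent $|X|^{-1}J_X=e_{1_{\mathcal{A}(S)}}$ in the $X$-factor while $\hat e_\psi$ carries $e_\psi$ with $\psi\neq 1_{\mathcal{A}(S)}$, so $e_\eta$ and the $\hat e_\psi$ are orthogonal. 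Hence $E:=e_{1_{\mathcal{T}(S\wr T)}}+\sum_\chi\tilde e_\chi+\sum\bar e_\varphi+\sum\hat e_\psi+e_\eta$ is an idempotent.

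Next I would compute $\mathrm{tr}(E)$ family by family, using $\mathrm{tr}(\varepsilon_U J_{X\times Y}\varepsilon_U)=|U|$ and $\mathrm{tr}(A\otimes B)=\mathrm{tr}(A)\,\mathrm{tr}(B)$. For the trivial idempotent one gets $\mathrm{tr}(e_{1_{\mathcal{T}(S\wr T)}})=\sum_{r\in S\wr T}n_r^{-1}|{(x_0,y_0)r}|=|S\wr T|=|S|+|T|-1$. Since the central primitive idempotents of $\mathcal{T}(U^{(1_Y)})\cong\mathcal{T}(X,S,x_0)$ sum to $I_X$ of trace $|X|$ and the trivial one among them has trace $|S|$, the non-trivial $\chi$ contribute $\sum_\chi\mathrm{tr}(\tilde e_\chi)=|X|-|S|$. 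Likewise, using $\mathcal{A}(U^{(t)})\cong\mathcal{A}(S)$ whose non-trivial central primitive idempotents have traces summing to $|X|-1$, each $t\in T_1\setminus\{1_Y\}$ contributes $|X|-1$ and each $t\in T_2$ contributes $2(|X|-1)$, the factor $2$ coming from $\mathrm{tr}(\varepsilon_{y_0 t})=2$. Finally $\mathrm{tr}(e_\eta)=\sum_{t\in T_2}\mathrm{tr}(G_{y_0 t,y_0 t})=|T_2|$, the off-diagonal terms $\overline{\varepsilon_{y_0 t}}$ being traceless; the same count holds in case~(2) since the sets $U_i$ partition $T_2$.

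Adding these contributions gives $\mathrm{tr}(E)=(|S|+|T|-1)+(|X|-|S|)+(|T_1|-1)(|X|-1)+2|T_2|(|X|-1)+|T_2|$, which after cancellation collapses to $|X|(|T_1|+2|T_2|)$. Since $|T_1|+2|T_2|=\sum_{t\in T}n_t=|Y|$, this equals $|X|\,|Y|=\mathrm{tr}(I_{X\times Y})$. As $I_{X\times Y}-E$ is then an idempotent of trace $0$, hence of rank $0$, we conclude $E=I_{X\times Y}$.

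I expect the main obstacle to be the orthogonality bookkeeping rather than the trace arithmetic: specifically, confirming that on each $T_2$-block the symmetric idempotents $\hat e_\psi$ and the antisymmetric idempotent $e_\eta$ do not overlap (which rests on $e_\psi\,e_{1_{\mathcal{A}(S)}}=0$), and treating case~(2) uniformly with cases~(1)/(3) by verifying that $\{U_i\}_{i\neq i_0}$ really partitions $T_2$, so that $\mathrm{tr}(e_\eta)=|T_2|$ in every case.
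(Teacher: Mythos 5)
Your proof is correct, but it takes a genuinely different route from the paper's. The paper proves Lemma~\ref{lem:35} by brute force: it writes down explicit closed forms for $e_{1_{\mathcal{T}(S\wr T)}}$ and for each partial sum $\sum_\chi\tilde e_\chi$, $\sum_\varphi\bar e_\varphi$, $\sum_\psi\hat e_\psi$, $e_\eta$ in terms of the matrices $\varepsilon_{F^{(t)}}J_{X\times Y}\varepsilon_{F^{(t)}}$, $\varepsilon_{F^{(t)}}I_{X\times Y}\varepsilon_{F^{(t)}}$ and $J_X\otimes(\varepsilon_{y_0t}\pm\overline{\varepsilon_{y_0t}})$, and then observes that everything telescopes block by block to $I_{X\times Y}$ (the crux being the cancellation on each $T_2$-block of $\tfrac{1}{2|X|}J_X\otimes(\varepsilon_{y_0t}+\overline{\varepsilon_{y_0t}})-\tfrac{1}{|X|}J_X\otimes\varepsilon_{y_0t}+\tfrac{1}{2|X|}J_X\otimes(\varepsilon_{y_0t}-\overline{\varepsilon_{y_0t}})=0$). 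You instead invoke Lemmas~\ref{lem:32}--\ref{lem:34} and \ref{lem:ideal} to get pairwise orthogonality of distinct central primitive idempotents, so that the sum $E$ is itself an idempotent, and then reduce the identity $E=I_{X\times Y}$ to the numerical computation $\mathrm{tr}(E)=|X|\,|Y|$ together with the rank--trace identity for idempotents. Your trace bookkeeping checks out ($(|S|+|T|-1)+(|X|-|S|)+(|T_1|-1)(|X|-1)+2|T_2|(|X|-1)+|T_2|=|X|(|T_1|+2|T_2|)=|X||Y|$), and your two flagged obstacles are genuine but resolvable: the orthogonality of $\hat e_\psi$ and $e_\eta$ does come down to $e_\psi J_X=0$ for $\psi\neq 1_{\mathcal{A}(S)}$, and in case~(2) the sets $U_i$ ($i\neq i_0$) do partition $T_2$ because each $y_0t$ with $t\in T_2$ lies in a single $H$-coset contained in some $Y_i$ with $i\neq i_0$ (as noted in the proof of Lemma~\ref{lem:222333}). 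The trade-off: the paper's computation is self-contained and does not depend on the centrality or primitivity of the individual idempotents, whereas your argument leans on those earlier lemmas but avoids the explicit matrix cancellations, replacing them with a cleaner structural count.
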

\begin{proof}
It is easy to see that
\begin{eqnarray*}
  e_{1_{\mathcal{T}(S\wr T)}} &=& e_{1_{\mathcal{T}(U^{(1_Y)})}}\otimes \varepsilon_{\{y_{0}\}}
  + \sum_{t\in T_1 \setminus \{1_Y\}}\frac{1}{|X|}\varepsilon_{F^{(t)}}J_{X\times Y}\varepsilon_{F^{(t)}} + \sum_{t\in T_2}\frac{1}{2|X|}\varepsilon_{F^{(t)}}J_{X\times Y}\varepsilon_{F^{(t)}},
  \end{eqnarray*}

  \begin{eqnarray*}
  \sum_{\chi\in \mathrm{Irr}(\mathcal{T}(U^{(1_Y)}))^\times}\tilde{e}_\chi
  &=& \varepsilon_{F^{(1)}}I_{X\times Y}\varepsilon_{F^{(1)}}
  -e_{1_{\mathcal{T}(U^{(1_Y)})}}\otimes \varepsilon_{\{y_0\}},
  \end{eqnarray*}

  \begin{eqnarray*}
  \sum_{\varphi\in \mathrm{Irr}(\mathcal{A}(U^{(t)}))^\times}\bar{e}_\varphi
  &=& \varepsilon_{F^{(t)}}I_{X\times Y}\varepsilon_{F^{(t)}}
  -\frac{1}{|X|}\varepsilon_{F^{(t)}}J_{X\times Y}\varepsilon_{F^{(t)}}
\end{eqnarray*}
for each $t \in T_1 \setminus \{1_Y\}$,

\begin{eqnarray*}
  \sum_{\psi \in \mathrm{Irr}(\mathcal{A}(S))^\times}\hat{e}_\psi
  &=& \varepsilon_{F^{(t)}}I_{X\times Y}\varepsilon_{F^{(t)}}
  -\frac{1}{|X|}(J_{X\times y_{t(1)}}+J_{X\times y_{t(2)}})
\end{eqnarray*}
for each $t \in T_2$,

\begin{eqnarray*}
e_{\eta} = \sum_{t \in T_2} \frac{1}{2|X|}J_{X}\otimes (\varepsilon_{y_{0}t} - \overline{\varepsilon_{y_{0}t}}).
\end{eqnarray*}

Thus, we have
\begin{eqnarray*}
  e_{1_{\mathcal{T}(S\wr T)}}
  +\sum_{\chi\in \mathrm{Irr}(\mathcal{T}(U^{(1_Y)}))^\times}\tilde{e}_\chi
  +\sum_{t \in T_1\setminus\{1_Y\}} \sum_{\varphi\in \mathrm{Irr}(\mathcal{A}(U^{(t)}))^\times}\bar{e}_\varphi + \\
  \sum_{t \in T_2} \sum_{\psi\in \mathrm{Irr}(\mathcal{A}(S))^\times}\hat{e}_\psi
  + e_{\eta} = I_{X\times Y}.
\end{eqnarray*}
\end{proof}

\section{Main result}\label{sec:maintheorem}
In conclusion, we have determined the set of all central primitive idempotents of Terwilliger algebras of wreath products by quasi-thin schemes.
Combining Section \ref{sec:main} and Theorem 4.1 of \cite{hkm} gives the following theorem.

\begin{thm}\label{thm:quasithinthm}
Let $(X,S)$ and $(Y,T)$ be association schemes. Suppose that $(Y,T)$   is a quasi-thin scheme or a one-class scheme. Fix $x_{0}\in X$ and
  $y_{0}\in Y$, and consider the wreath product $(X\times Y, {S\wr T})$. Then
\begin{enumerate}
\item[(1)] If $(Y,T)$ is a thin scheme or a one-class scheme, then
\begin{eqnarray*}
\{e_{1_{\mathcal{T}(S\wr T)}}\} &\cup& \{\tilde{e}_{\chi}
\mid \chi\in \mathrm{Irr}(\mathcal{T}(U^{(1_Y)}))^\times\}\\
&\cup& \bigcup_{t \in T \setminus \{1_Y\}} \{ \bar{e}_{\varphi} \mid
\varphi \in \mathrm{Irr}(\mathcal{A}(U^{(t)}))^\times \}
\end{eqnarray*}
is the set of all central primitive idempotents of $\mathcal{T}(X\times Y, \ {S\wr T}, \ (x_{0},y_{0}))$.
\item[(2)] If $(Y,T)$ has $T^{\bot} \subseteq T_2$ or $|T^{\bot}| \geq 2$, then
\begin{eqnarray*}
\{e_{1_{\mathcal{T}(S\wr T)}}\} &\cup& \{\tilde{e}_{\chi}
\mid \chi\in \mathrm{Irr}(\mathcal{T}(U^{(1_Y)}))^\times \}\\
&\cup& \bigcup_{t \in T_1 \setminus \{1_Y\}} \{ \bar{e}_{\varphi} \mid
\varphi \in \mathrm{Irr}(\mathcal{A}(U^{(t)}))^\times \}\\
&\cup& \bigcup_{t \in T_2} \{ \hat{e}_{\psi} \mid
\psi \in \mathrm{Irr}(\mathcal{A}(S))^\times \} \cup \{e_\eta \}
\end{eqnarray*}
is the set of all central primitive idempotents of $\mathcal{T}(X\times Y, \ {S\wr T}, \ (x_{0},y_{0}))$.
\item[(3)] If $(Y,T)$ has $|T^{\bot}| = 1$ and $T^{\bot} \subseteq T_1$, then
\begin{eqnarray*}
\{e_{1_{\mathcal{T}(S\wr T)}}\} &\cup& \{\tilde{e}_{\chi}
\mid \chi\in \mathrm{Irr}(\mathcal{T}(U^{(1_Y)}))^\times \}\\
&\cup& \bigcup_{t \in T_1 \setminus \{1_Y\}} \{ \bar{e}_{\varphi} \mid
\varphi \in \mathrm{Irr}(\mathcal{A}(U^{(t)}))^\times \}\\
&\cup& \bigcup_{t \in T_2} \{ \hat{e}_{\psi} \mid
\psi \in\mathrm{ Irr}(\mathcal{A}(S))^\times \} \\
&\cup& \{e_{\eta_i} \mid i \in I \setminus \{i_0\}\}
\end{eqnarray*}
is the set of all central primitive idempotents of $\mathcal{T}(X\times Y, \ {S\wr T}, \ (x_{0},y_{0}))$.
\end{enumerate}
\end{thm}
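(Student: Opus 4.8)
The plan is to exploit the fact, recorded in Section~\ref{sec:pre}, that $\mathcal{T}(S\wr T)$ is a semisimple $\mathbb{C}$-algebra. In any semisimple algebra the decomposition of the identity as a sum of pairwise orthogonal central primitive idempotents is unique, and its summands are precisely the central primitive idempotents of the algebra. Consequently it suffices to produce a family of pairwise distinct central primitive idempotents whose sum is $I_{X\times Y}$; once this is achieved, the family is automatically the complete list. I would also use the standard observation that two \emph{distinct} central primitive idempotents $f,f'$ of a semisimple algebra are automatically orthogonal: their product $ff'$ is a central idempotent lying in both simple components, hence by primitivity equals $0$ or $f$ and also $0$ or $f'$, and $ff'=f=f'$ is excluded by distinctness. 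Thus I need only verify distinctness by hand, not orthogonality.

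For assertion (1), where $(Y,T)$ is thin or one-class, the statement is exactly Theorem~4.1 of \cite{hkm}, so nothing further is required. For assertions (2) and (3) the individual idempotents have already been certified in Section~\ref{sec:main}: $e_{1_{\mathcal{T}(S\wr T)}}$ is central primitive as the idempotent of the trivial character, the $\tilde{e}_\chi$ by Lemma~\ref{lem:32}, the $\bar{e}_\varphi$ by Lemma~\ref{lem:33}, and the $\hat{e}_\psi$ by Lemma~\ref{lem:34}. The remaining idempotents $e_\eta$ (case (2)) and $e_{\eta_i}$ (case (3)) are central primitive because, by Lemma~\ref{lem:ideal} and its analogue for the sets $U_i$, they are the identity elements of the matrix ideals $\langle\{G_{y_0t,y_0t'}\mid t,t'\in T_2\}\rangle$, respectively $\langle\{G_{y_0t,y_0t'}\mid t,t'\in U_i\}\rangle$.

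The separation of cases (2) and (3) is governed entirely by the behaviour of $T_{y_0}$ on the size-$2$ fibers of $\mathcal{C}_{y_0}$, which is where the structure theory of quasi-thin schemes enters. In case (2), Theorem~\ref{thm:3type}(1),(3) together with Lemma~\ref{lem:222} give $\Delta\times\Gamma\notin T_{y_0}$ for all size-$2$ fibers $\Delta,\Gamma$, so every pair $t,t'\in T_2$ contributes a generator $G_{y_0t,y_0t'}$ to a single ideal isomorphic to $\mathrm{Mat}_{T_2}(\mathbb{C})$, whose identity is the one idempotent $e_\eta$. In case (3), Theorem~\ref{thm:3type}(2) and Lemma~\ref{lem:222333} show instead that cross-class blocks $\Delta\times\Gamma$ (for fibers in distinct classes) lie in $T_{y_0}$, so the $G$-matrices split into separate matrix ideals indexed by $I\setminus\{i_0\}$, yielding one idempotent $e_{\eta_i}$ per class. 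This is the step I expect to be the crux: checking that the decomposition is exactly this fine and no finer, that is, that within a class the relevant blocks are genuinely excluded from $T_{y_0}$ while across classes they are included, so that no two of the listed idempotents coincide and none can be split further.

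Finally I would invoke Lemma~\ref{lem:35}, which states that the sum of $e_{1_{\mathcal{T}(S\wr T)}}$, the $\tilde{e}_\chi$, the $\bar{e}_\varphi$, the $\hat{e}_\psi$, and $e_\eta$ (respectively the $e_{\eta_i}$) equals $I_{X\times Y}$. Combining distinctness with the first paragraph, these idempotents are pairwise orthogonal central primitive idempotents summing to the identity, so any further central primitive idempotent $g$ would be orthogonal to each summand and hence satisfy $g=g\cdot I_{X\times Y}=0$, a contradiction. Therefore the listed family is the complete set of central primitive idempotents, which establishes (2) and (3).
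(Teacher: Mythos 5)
Your proposal is correct and follows essentially the same route as the paper, which proves the theorem by combining Theorem 4.1 of \cite{hkm} for case (1) with the lemmas of Section~\ref{sec:main} (Lemmas \ref{lem:32}--\ref{lem:ideal} for primitivity and centrality of each idempotent, and Lemma \ref{lem:35} for the sum being the identity). The only thing you add is an explicit write-up of the standard semisimplicity argument (distinct central primitive idempotents are orthogonal, and a family of them summing to the identity is complete), which the paper leaves implicit.
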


\thanks{\textbf{Acknowledgements} The author would like to thank an anonymous referee for valuable comments.}

\bibstyle{plain}

\end{document}